\documentclass[reqno,11pt]{amsart}

\usepackage{mathdots}
\usepackage{tikz}
\usepackage{tikz-cd}
\usetikzlibrary{automata}
\usepackage{amssymb}
\usepackage{amsgen}
\usepackage{amsmath}
\usepackage{amsthm}
\usepackage{mathrsfs}
\usepackage{cite}
\usepackage{amsfonts}
\usepackage{enumitem}

\newcommand{\hgt}{\mathop{\mathrm {ht}}}

\newcommand{\rad}{\mathop{\mathrm{rad}}\nolimits}

\newcommand{\J}{\mathrel{\mathscr J}} 
\newcommand{\R}{\mathrel{\mathscr R}} 
\newcommand{\eL}{\mathrel{\mathscr L}} 

\usepackage{xcolor}

\newtheorem{Thm}{Theorem}[section]
\newtheorem{Prop}[Thm]{Proposition}

\newtheorem{Lemma}[Thm]{Lemma}
{\theoremstyle{definition}
}
{\theoremstyle{remark}
}
\newtheorem{Cor}[Thm]{Corollary}
{\theoremstyle{remark}
}
{\theoremstyle{remark}
}

{\theoremstyle{remark}
}
{\theoremstyle{remark}
}
{\theoremstyle{remark}
}
{\theoremstyle{remark}
\newtheorem*{Claim*}{Claim}}

\numberwithin{equation}{section}

\title[Integral quiver presentations]{Integral quiver presentations for $\mathscr R$-trivial semigroup algebras}

\author{Benjamin Steinberg}
\address[B.~Steinberg]{%
    Department of Mathematics\\
    City College of New York\\
    Convent Avenue at 138th Street\\
    New York, New York 10031\\
    USA}
\email{bsteinberg@ccny.cuny.edu}

\thanks{The author was supported by the NSF grant DMS-2452324, a Simons Foundation Collaboration Grant, award number 849561, the Australian Research Council Grant DP230103184, and Marsden Fund Grant MFP-VUW2411.}
\date{\today}

\keywords{$\R$-trivial semigroup, quiver, path algebra, bound quiver}
\subjclass[2020]{20M25,20M30, 20M50}

\begin{document}

\begin{abstract}
Semigroups in which right divisibility is a partial order, that is, $\R$-trivial semigroups, have been studied by people in algebraic combinatorics in connection with Markov chains (especially left regular bands).  Margolis and the author showed that the quiver of the algebra of an $\R$-trivial monoid is field independent.  We prove here that the integral semigroup ring of an $\R$-trivial semigroup has a bound quiver presentation provided that it is unital.   This then gives a uniform quiver presentation over any field via extension of scalars. 
\end{abstract}
\maketitle

\section{Introduction}

Bidigare, Hanlon and Rockmore, in a seminal paper~\cite{BHR}, showed that the representation theory of hyperplane face semigroups plays a key role in the analysis of a number of naturally occurring Markov chains such as the Tsetlin library and the riffle shuffle.  Brown~\cite{Brown} observed that the crucial feature of hyperplance face semigroups that made the theory work is that they are left regular bands, i.e., semigroups in which each element is idempotent and for which right divisibility is a partial order.  Ayyer, Schilling, the author and Thi\'ery~\cite{ayyer_schilling_steinberg_thiery.2013,ayyer_schilling_steinberg_thiery.sandpile.2013} argued that much of the theory carries over to random walks on $\mathscr R$-trivial semigroups, that is, semigroups where right divisiblity is a partial order, and showed that many natural Markov chains arise as such.

Study of the representation theory of $\R$-trivial semigroups has been carried out by researchers in both semigroup theory and algebraic combinatorics~\cite{SchockerRtrivial,BergeronSaliola,DO,repbook}.  Important examples of $\mathscr R$-trivial semigroups include left regular bands~\cite{Brown1,ourmemoirs,MSS}, such as hyperplane face monoids~\cite{BHR}, and $\mathscr J$-trivial semigroups~\cite{Jtrivialpaper}, such as the $0$-Hecke monoid~\cite{Carter0Hecke,Jtrivialpaper}, the Catalan monoid~\cite{catalan} and the double Catalan monoid~\cite{DoubleCatalan}.

Every finite dimensional algebra over a splitting field is Morita equivalent to the path algebra of a quiver modulo an admissible ideal~\cite{assem,benson}.   It has been observed that for certain classes of semigroups, the quiver (and even the admissible ideal) for the semigroup algebra are independent of the field~\cite{DO,ourmemoirs,Saliola,catalan}.  This led the author to ask whether these quiver presentations can in fact be obtained over the integers.   The author showed~\cite{bandquiver} that for semigroups in which each element is an idempotent, i.e., bands, it is indeed the case that their algebras admit a presentation by a bound quiver over the integers, which then gives a uniform quiver presentation over all fields.   

In this paper, we consider integral quiver presentations for algebras of $\mathscr R$-trivial semigroups.  It is already known that the quiver of the algebra of an $\mathscr R$-trivial monoid over a field does not depend on the field~\cite{DO}.  Our main result is that the integral semigroup algebra of a finite $\R$-trivial semigroup admits a bound quiver presentation provided that the algebra is unital.  This in turn leads to a uniform quiver presentation  over every field.  We also characterize when the algebra of an $\R$-trivial semigroup is unital.  

The quiver of an $\R$-trivial semigroup algebra was obtained in~\cite{DO} using homological means. Here we follow the approach of~\cite{bandquiver}.  Every $\R$-trivial semigroup $S$ has a universal homomorphism $c\colon S\to \Lambda(S)$ to a meet semilattice.   The kernel $\mathfrak J$ of the induced map $\mathbb ZS\to \mathbb Z\Lambda(S)\cong \mathbb Z^{\Lambda(S)}$ turns out to be a nilpotent ideal.
Using the results of~\cite{bandquiver}, to get the integral bound quiver presentation, we must show that $\mathfrak J/\mathfrak J^2$ is free abelian.  A new innovation in this paper is that we use derivations to prove linear independence.   The vertices of the quiver are the elements of $\Lambda(S)$ and the arrows correspond to certain equivalence classes of equivalence relations on subsets of $S$.

The paper is organized as follows.  We begin in Section~2 with some preliminaries on $\R$-trivial semigroups that are well known, but should make the paper accessible to non-specialists in semigroup theory.   Then we proceed in Section~3 to prove some structural results about the semigroup algebra of an $\R$-trivial semigroup, including bounding the nilpotency index of $\mathfrak J$, describing when the algebra is unital and obtaining a new basis for the algebra using the structure of a certain complete set of primitive idempotents.   Section~4 is extracted from~\cite{bandquiver} and discusses bound quiver presentations over commutative ground rings and recalls the Gabriel-type~\cite{assem,benson}
 criterion for the existence of a bound quiver presentation over a principal ideal domain from~\cite{bandquiver}.  Section~5 proves the existence of integral quiver presentations for unital semigroup algebras of $\R$-trivial semigroups. 

\section{$\mathscr R$-trivial semigroups}
In this paper all semigroups are assumed to be finite.   Each element $s$ of a finite semigroup $S$ has a unique idempotent power, denoted $s^\omega$.
There are natural  preorders $\leq_{\mathscr L}$, $\leq_{\mathscr R}$ and $\leq_{\J}$, due to Green~\cite{Green}, that can be defined on any semigroup.  Denoting $S$ with an adjoined identity by $S^1$, they are given by:

\begin{itemize}
    \item $s\leq_{\mathscr L} t\iff S^1s\subseteq S^1t$;
    \item $s\leq_{\mathscr R} t\iff sS^1\subseteq tS^1$;
    \item $s\leq_{\mathscr J} t\iff S^1sS^1\subseteq S^1tS^1$.
\end{itemize}

A semigroup is $\mathscr R$-trivial if $\leq_{\mathscr R}$ is a partial order, that is, $sS^1=tS^1$ implies $s=t$.  The following characterization of $\mathscr R$-trivial semigroups is well known, but we give a proof for completeness.

\begin{Prop}\label{p:R-trivial.char}
A semigroup $S$ is $\mathscr R$-trivial if and only if it satisfies $(st)^\omega s=(st)^\omega$ for all $s,t\in S$.    
\end{Prop}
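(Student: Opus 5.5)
We prove the two implications separately, using only finiteness of $S$ and the definition of $s^\omega$ as the unique idempotent power of $s$.

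For the forward direction, assume $S$ is $\mathscr R$-trivial and fix $s,t\in S$. Let $e=(st)^\omega=(st)^n$ with $n\geq 1$. First, $e=(st)^{n-1}s\cdot t$, so $e\in (st)^{n-1}sS^1$ (reading $(st)^0s$ as $s$). In the other direction, $(st)^{n-1}s=e(st)^{n-1}s$ would hold if we first show $e(st)^{n-1}s=(st)^{n-1}s$, which is circular. So instead we compare $e$ and $es$ directly:
\begin{itemize}
\item $es\in eS^1$, so $es\leq_{\mathscr R} e$;
\item $e=e\cdot e=e(st)^n=es\cdot t(st)^{n-1}$, so $e\in esS^1$ and $e\leq_{\mathscr R} es$.
\end{itemize}
Hence $eS^1=esS^1$, and $\mathscr R$-triviality gives $es=e$, that is, $(st)^\omega s=(st)^\omega$.

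For the converse, assume the identity holds and suppose $aS^1=bS^1$. If $a=b$ we are done. Otherwise $a=bx$ and $b=ay$ with $x,y\in S$; we may take them in $S$ rather than $S^1$, since if either is the adjoined identity then $a=b$ immediately. Then $a=axy$, and iterating gives $a=a(xy)^k$ for all $k$, so in particular $a=a(xy)^\omega$. Applying the identity with $s=x$ and $t=y$ gives $(xy)^\omega x=(xy)^\omega$. Therefore
\[
b=ay=a(xy)^\omega\cdot\ldots
\]
does not simplify directly, so we use $x$ instead:
\[
b=bxy,\qquad a=bx=b(xy)^\omega x=b(xy)^\omega=b.
\]
Here $b=bxy$ follows from $b=ay=bxy$. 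Iterating gives $b=b(xy)^\omega$, and then $a=bx=b(xy)^\omega x=b(xy)^\omega=b$. Hence $S$ is $\mathscr R$-trivial.

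The only delicate points are handling the adjoined identity in $S^1$ and placing the idempotent in the right position. The first is dealt with by the trivial case above. For the second, the key step in the converse is that $b=b(xy)^\omega$ lets us insert the idempotent just before $x$, which is exactly where the identity $(xy)^\omega x=(xy)^\omega$ applies.
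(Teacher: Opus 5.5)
Your proof is correct and is essentially the paper's. The forward direction is the same comparison showing $(st)^\omega$ and $(st)^\omega s$ generate the same principal right ideal. Your final converse chain $b=bxy\Rightarrow b=b(xy)^\omega\Rightarrow a=bx=b(xy)^\omega x=b(xy)^\omega=b$ is the paper's contrapositive argument stated directly, with the roles of $a$ and $b$ swapped.

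You should delete the stray claim ``$a=axy$'' and the abandoned attempts around it. From $a=bx$ and $b=ay$ one actually gets $a=ayx$, and none of those lines are used in your final argument.
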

\begin{proof}
    If $S$ is $\mathscr R$-trivial and $(st)^n=(st)^\omega$, then $(st)^\omega st(st)^{n-1} = (st)^\omega$, and so $(st)^\omega s=(st)^\omega$ by $\mathscr R$-triviality.  If $S$ is not $\mathscr R$-trivial, then $aS^1=bS^1$ for some $a\neq b$.  Then $as=b$ and $bt=a$ for some $s,t\in S$.  Thus, $ast=a$, and so $a(st)^\omega=a$.  Therefore, $a(st)^\omega = a\neq b=as=a(st)^\omega s$, whence $(st)^\omega\neq (st)^\omega s$.   
\end{proof}

Notice that $s\in Se=S^1e$ if and only if $se=s$ for an idempotent $e$.
The following well-known structural properties relating $\leq_{\mathscr L}$ and $\leq_\mathscr J$ in an $\R$-trivial semigroup play a crucial role.  

\begin{Prop}\label{p:J=L for idempotents}
   Let $S$ be $\mathscr R$-trivial and $e\in S$ be an idempotent.  Then the following are equivalent for $s\in S$.
   \begin{enumerate}
       \item $e\leq_{\J} s$;
       \item $es = e$;
       \item $es^{\omega}=e$;
       \item $e\leq_{\eL} s^{\omega}$;
       \item $e\leq_{\eL} s$.
   \end{enumerate}
\end{Prop}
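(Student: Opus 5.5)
We will prove the equivalences as a cycle, together with a few shortcuts. The two nontrivial ingredients are Proposition~\ref{p:R-trivial.char}, in the form $(xy)^\omega x=(xy)^\omega$, and the observation preceding the statement that, for an idempotent $e$, $s\leq_{\eL} e$ if and only if $se=s$.

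\textbf{(1)$\Rightarrow$(2).} Suppose $e\leq_{\J} s$, so $e=usv$ with $u,v\in S^1$. Since $e$ is idempotent, $e=e^n=(usv)^n$ for all $n$, hence $e=(usv)^\omega$. Apply Proposition~\ref{p:R-trivial.char} with $x=us$ and $y=v$ (absorbing a possible empty $u$ or $v$ appropriately). This gives $e=(usv)^\omega = (usv)^\omega us$.

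To get $es=e$ we argue more carefully. Write $e=(usv)^\omega$, so $e = e\cdot usv$. Set $a=eus$; then $e=av$, and $a=eus\in eS^1$. Also $a\,v\,us = e\,us = a$.

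Now apply Proposition~\ref{p:R-trivial.char} to the pair $x=s$, $y=vu$:
\[
(svu)^\omega s=(svu)^\omega .
\]
Note that $e=(usv)^\omega$ and $(svu)^\omega$ are conjugate-related: $e\,us\,(vus)^{k}=e\,us$. Alternatively, and more cleanly, use $\R$-triviality directly. Since $e=e\cdot us\cdot v$, we have $e\leq_{\R} eus\leq_{\R} e$, so $eus=e$ by $\R$-triviality. Similarly $eu\cdot s$ lies between: $e\leq_{\R} eu\leq_{\R} e$ gives $eu=e$. Therefore $es=eus=e$. This $\R$-triviality sandwich is the key step, and it also handles the cases where $u$ or $v$ is empty.

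\textbf{(2)$\Rightarrow$(3).} From $es=e$ we get $es^n=e$ for all $n$, and in particular $es^\omega=e$.

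\textbf{(3)$\Rightarrow$(4).} We have $e=es^\omega\in Ss^\omega$, so $e\leq_{\eL} s^\omega$.

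\textbf{(4)$\Rightarrow$(5).} $s^\omega=s^{n}=s^{n-1}s\in S^1s$, so $e\leq_{\eL} s^\omega\leq_{\eL} s$.

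\textbf{(5)$\Rightarrow$(1).} This is immediate, since $S^1e\subseteq S^1s$ implies $S^1eS^1\subseteq S^1sS^1$.

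The only real obstacle is (1)$\Rightarrow$(2). The plan there is the $\R$-sandwich argument, $e=eus v$ giving first $eu=e$ and then $eus=e$; Proposition~\ref{p:R-trivial.char} is not even needed. I will double-check that $eu\in eS^1$ and that $e\in eu S^1$ (namely $e=eu\cdot sv$) to justify $eu\,\R\, e$, and likewise that $e= eus\cdot v$ gives $eus\,\R\,e$.
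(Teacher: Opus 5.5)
Your proof is correct and essentially the same as the paper's. The key step (1)$\Rightarrow$(2) is the same $\R$-sandwich, $eS^1\supseteq euS^1\supseteq eusS^1\supseteq eusvS^1=eS^1$, which forces $e=eu=eus$; the remaining implications follow the same chain $S^1e\subseteq S^1s^\omega\subseteq S^1s\subseteq S^1sS^1$, and your opening detour through Proposition~\ref{p:R-trivial.char} can be deleted.
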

\begin{proof}
    If $e\leq_{\J} s$, then $e=xsy$ with $x,y\in S^1$.  Then $e=ee=exsy$ implies $e=ex=exs$, and so $es= (ex)s=e$ by $\R$-triviality.  Trivially $es=e$ implies $es^{\omega}=e$, which in turn implies $S^1e\subseteq S^1s^{\omega}\subseteq S^1s\subseteq S^1sS^1$.  This completes the proof.
\end{proof}

\begin{Prop}\label{p:L.to.idem}
    If $S$ is $\mathscr R$-trivial and $S^1s=Se$ with $e$ an idempotent, then $s$ is an idempotent.
\end{Prop}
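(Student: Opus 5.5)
From $S^1s=Se$ we can read off two facts: $s\in Se$, so $se=s$; and $e\in S^1s$, so $e\leq_{\eL} s$. The plan is to turn the second fact into $es=e$ using Proposition~\ref{p:J=L for idempotents}, and then exhibit $s$ and $s^2$ as $\R$-equivalent, so that $\R$-triviality forces $s=s^2$.

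Since $e$ is idempotent and $e\leq_{\eL} s$, the equivalence (5)$\Rightarrow$(2) of Proposition~\ref{p:J=L for idempotents} gives $es=e$. Combining this with $se=s$, we get
\[
s^2e = s(se)=se = s,
\]
so $s\in s^2S^1$. Trivially $s^2\in sS^1$. Hence $sS^1=s^2S^1$, and $\R$-triviality yields $s=s^2$, so $s$ is idempotent.

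Alternatively, we can avoid the $\R$-class argument. Write $e=xs$ with $x\in S^1$. Then $s=se=sxs$, so $s\R sx$, and $\R$-triviality gives $s=sx$. From this,
\[
s^2 = s(xs) = se = s.
\]

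There is no real obstacle here. The only point requiring care is invoking the right implication of Proposition~\ref{p:J=L for idempotents}, or, in the alternative route, remembering that $x$ may be the adjoined identity. That case is harmless, since then $e=s$ and $s$ is already idempotent.
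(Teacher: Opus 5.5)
Your first route has a broken step. In the display $s^2e=s(se)=se=s$, the equality $s(se)=se$ is not justified. Since $se=s$, one has $s(se)=s\cdot s=s^2$, so claiming $s(se)=se=s$ is claiming $s^2=s$, which is the conclusion. The display also never uses $es=e$, and $se=s$ alone cannot give $s^2e=s$. For instance, in the $\R$-trivial semigroup $\{E_{12},E_{22},0\}$ of matrix units, $s=E_{12}$ and $e=E_{22}$ satisfy $se=s$, but $s^2e=0$. The repair is immediate and makes the $\R$-class detour unnecessary: $s=se=s(es)=(se)s=s^2$. That one-liner, with $es=e$ taken from Proposition~\ref{p:J=L for idempotents}, is exactly the paper's proof.

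Your alternative route is correct and complete. From $e=xs$ with $x\in S^1$, the equation $s=se=sxs$ gives $sS^1=sxS^1$, hence $s=sx$ and $s^2=(sx)s=s(xs)=se=s$. It differs from the paper in where $\R$-triviality is applied. The paper applies it on the side of $e$, inside the proof of Proposition~\ref{p:J=L for idempotents}, to get $es=e$. You apply it directly to $s$ and $sx$ to get $sx=s$. Your version is self-contained and avoids Proposition~\ref{p:J=L for idempotents}. The paper's version is shorter given that proposition, which it needs elsewhere anyway.
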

\begin{proof}
    We have $se=s$ by assumption and $es=e$ by Proposition~\ref{p:J=L for idempotents}.  Therefore, $s=se=ses=s^2$.
\end{proof}

\begin{Lemma}\label{l:stabilize}
Let $S$ be $\mathscr R$-trivial.  Then $st=s$ if and only if $st^\omega=s$.    In particular, $s^\omega=s^\omega s$.
\end{Lemma}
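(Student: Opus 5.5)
Both implications are short, and I will prove them separately; the ``in particular'' then follows by specializing.

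For the forward direction, suppose $st=s$. Iterating gives $st^n=s$ for every $n\geq 1$. Since $t^\omega$ is a positive power of $t$ (namely $t^\omega=t^n$ for a suitable $n\geq 1$), I conclude $st^\omega=s$. This direction uses nothing about $\R$-triviality.

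For the converse, suppose $st^\omega=s$. I will invoke Lemma~\ref{l:stabilize}'s companion, Proposition~\ref{p:R-trivial.char}, in a slightly different form. That proposition, applied with the pair $(t,t)$, only gives $(t^2)^\omega t=(t^2)^\omega$, that is, $t^\omega t=t^\omega$, since $(t^2)^\omega=t^\omega$. So first establish $t^\omega t=t^\omega$ directly, which is easy. Write $t^\omega=t^n$. Then $t^\omega\cdot t\cdot t^{n-1}=t^{2n}=t^\omega$, so $t^\omega S^1\subseteq t^\omega tS^1\subseteq t^\omega S^1$. Hence $t^\omega t\R t^\omega$, and $\R$-triviality yields $t^\omega t=t^\omega$. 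Alternatively, this is exactly Proposition~\ref{p:J=L for idempotents} with $e=t^\omega$: clearly $e\leq_{\eL} t$, so $et=e$.

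With this in hand, $st=st^\omega t=st^\omega=s$.

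The ``in particular'' then follows by taking $s:=s^\omega$ and $t:=s$. Since $s^\omega s^\omega=s^\omega$, the converse gives $s^\omega s=s^\omega$. This is the same computation as the identity $t^\omega t=t^\omega$ above.

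The only mildly nontrivial step is the identity $t^\omega t=t^\omega$. For it I would cite Proposition~\ref{p:J=L for idempotents} (iv)$\Rightarrow$(ii) with $e=t^\omega$, using $e\leq_{\eL} t^\omega$ trivially.
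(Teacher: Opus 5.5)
Your proof is correct, but it runs the argument in the opposite order from the paper.

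\textbf{The paper's route.} The paper proves the converse directly by applying $\R$-triviality to $s$ itself. From $st^\omega=s$ it gets $sS^1=st^\omega S^1\subseteq stS^1\subseteq sS^1$, hence $st=s$. It then reads off $s^\omega s=s^\omega$ as the special case $s:=s^\omega$, $t:=s$.

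\textbf{Your route.} You first establish the special identity $t^\omega t=t^\omega$, either by applying $\R$-triviality to $t^\omega$ or by citing Proposition~\ref{p:J=L for idempotents}. That citation is not circular, since the proposition's proof does not use this lemma. You then get the general statement from the computation $st=st^\omega t=st^\omega=s$.

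\textbf{Comparison.} The paper's version is a single application of $\R$-triviality and is marginally shorter. Yours makes visible that the only input needed is the identity $x^\omega x=x^\omega$. So your argument actually proves the lemma for every finite aperiodic semigroup, of which $\R$-trivial semigroups are a special case. Conversely, the lemma's ``in particular'' clause is precisely aperiodicity, so your decomposition isolates exactly the hypothesis the lemma needs.

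\textbf{A small expository point.} You say that Proposition~\ref{p:R-trivial.char} applied to the pair $(t,t)$ ``only gives'' $t^\omega t=t^\omega$. That undersells it: this is exactly the identity you need. Citing it, together with $(t^2)^\omega=t^\omega$, already suffices, so the direct verification that follows is redundant rather than necessary.
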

\begin{proof}
If $st=s$, then trivially $st^\omega=s$.  If $st^\omega =s$, then $sS^1=st^{\omega}S^1\subseteq stS^1\subseteq sS^1$, and so $st=s$ by $\mathscr R$-triviality.   In particular, $s^\omega s^\omega =s^\omega$ implies $s^\omega s=s^\omega$.
\end{proof}

Next we show that the idempotent-generated principal left ideals form a meet semilattice $\Lambda(S)$.

\begin{Prop}\label{p:meet.sml}
Let $S$ be an $\mathscr R$-trivial semigroup.  Then the set $\Lambda(S)$ of idempotent-generated principal left ideals is closed under intersection and $c\colon S\to \Lambda(S)$ given by $c(s)=Ss^{\omega}$ is a surjective homomorphism.
\end{Prop}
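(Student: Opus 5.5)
The strategy is to show directly that $Se\cap Sf=Sef$ for idempotents $e,f$, and that $ef$ behaves well enough that $Sef$ is generated by an idempotent. Then I would show that $c$ is a homomorphism by computing $S(st)^\omega$.

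\emph{Step 1 (intersection).} Let $e,f$ be idempotents and put $g=(ef)^\omega$. I claim $Se\cap Sf=Sg$. First, $g\in Sf$, since $g$ ends with $f$ and so $gf=g$. By Proposition~\ref{p:R-trivial.char}, $ge=(ef)^\omega e=(ef)^\omega=g$, so $g\in Se$. Hence $Sg\subseteq Se\cap Sf$. Conversely, let $x\in Se\cap Sf$, so $xe=x=xf$. Then $xef=xf=x$, so $x(ef)^n=x$ for all $n$, and thus $xg=x$, i.e.\ $x\in Sg$. Therefore $\Lambda(S)$ is closed under intersection. This is a meet semilattice under inclusion, with $Se\wedge Sf=S(ef)^\omega$.

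\emph{Step 2 (homomorphism).} I need $S(st)^\omega=Ss^\omega\cap St^\omega$. By Proposition~\ref{p:J=L for idempotents}, an idempotent $e$ satisfies $e\leq_{\eL} s$ iff $es=e$ iff $es^\omega=e$ iff $e\in Ss^\omega$. Put $h=(st)^\omega$. Since $h\leq_{\J} s$ and $h\leq_{\J} t$, we get $h\in Ss^\omega\cap St^\omega$, so $Sh\subseteq Ss^\omega\cap St^\omega=Sg$ with $g=(s^\omega t^\omega)^\omega$ by Step 1.

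For the reverse inclusion it suffices to show $gst=g$, because then $g(st)^\omega=g$ and $g\in Sh$. Since $g\in Ss^\omega$, we have $gs^\omega=g$, hence $gs=g$ by Lemma~\ref{l:stabilize}. Similarly $gt^\omega=g$ gives $gt=g$. Thus $gst=gt=g$. This step was the one I worried about, and Lemma~\ref{l:stabilize} (or equivalently Proposition~\ref{p:J=L for idempotents}, (3)$\Rightarrow$(2)) resolves it. Hence $c(st)=c(s)\cap c(t)=c(s)c(t)$.

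\emph{Step 3 (surjectivity).} Any element of $\Lambda(S)$ has the form $Se$ with $e$ idempotent, and $c(e)=Se^\omega=Se$. So $c$ is surjective.
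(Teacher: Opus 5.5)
Your proof is correct and is essentially the paper's argument: the paper proves the single identity $Ss^\omega\cap St^\omega=S(st)^\omega$. It uses Proposition~\ref{p:J=L for idempotents} for $S(st)^\omega\subseteq Ss^\omega\cap St^\omega$, and Lemma~\ref{l:stabilize} on an arbitrary $x$ in the intersection (so $xst=x$) for the reverse inclusion. That is exactly your Step~2, and closure under intersection is then just the case $s=e$, $t=f$, so your separate Step~1 is redundant. The only slip is your opening claim that $Se\cap Sf=Sef$, which fails when $ef$ is not idempotent (e.g.\ $e=\pi_1$, $f=\pi_2$ in the $0$-Hecke monoid of type $A_2$), though your actual argument correctly uses $(ef)^\omega$.
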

\begin{proof}
To prove both statements, it suffices to establish that $Ss^\omega\cap St^\omega = S(st)^\omega$.  From $(st)^\omega\leq_{\mathscr J}s,t$, we obtain $S(st)^\omega\subseteq Ss^\omega\cap St^\omega$ by Proposition~\ref{p:J=L for idempotents}.  On the other hand, if $x\in Ss^\omega\cap St^\omega$, then $xs=x$ and $xt=x$ by Lemma~\ref{l:stabilize}.  Therefore, $x(st)=x$, and so $x(st)^\omega = x$.  Thus, we have $x\in S(st)^\omega$.    
\end{proof}

Schocker~\cite{SchockerRtrivial} studied a class of monoids he called weakly ordered, which were later observed (independently by the author and Nicolas Thi\'ery) to be precisely the $\mathscr R$-trivial monoids.  The key property of weakly ordered monoids holds for $\mathscr R$-trivial semigroups whose semigroup ring contains a right identity.

\begin{Prop}\label{p:descent}
Let $S$ be an $\mathscr R$-trivial semigroup such that $s\in sS$ for all $s\in S$.  Then there is a map $d\colon S\to \Lambda(S)$ such that $st=s$ if and only if $d(s)\leq c(t)$. \end{Prop}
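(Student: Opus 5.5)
The plan is to take $d(s)$ to be the smallest element of $\Lambda(S)$ of the form $c(t)$ with $st=s$; we then check that this works. Define $T_s=\{t\in S: st=s\}$. By hypothesis $s\in sS$, so $s=su$ for some $u\in S$, and hence $T_s\neq\emptyset$. Moreover, $T_s$ is a subsemigroup: if $st=s=st'$, then $stt'=st'=s$. Since $S$ is finite, we can let $e$ be the idempotent power of the product of all elements of $T_s$ taken in some fixed order. Then $e\in T_s$, and we set $d(s)=c(e)=Se$.

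Next we show that $d(s)\le c(t)$ for every $t\in T_s$. Write $e=f^\omega$ where $f=t_1\cdots t_n$ lists all elements of $T_s$, and $t=t_i$ for some $i$. Then $e\leq_{\J} t$. By Proposition~\ref{p:J=L for idempotents}, $e\leq_{\eL}t^\omega$, that is, $Se\subseteq St^\omega$. This gives $d(s)\le c(t)$. In other words, $d(s)$ is the minimum of $c(T_s)$, and so it does not depend on the ordering chosen. Alternatively, one can use Proposition~\ref{p:meet.sml}: $c$ is a homomorphism onto a meet semilattice, so $c(e)=c(f)=\bigwedge_{t\in T_s}c(t)$.

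For the converse, suppose $d(s)\le c(t)$, that is, $Se\subseteq St^\omega$. Then $e\in St^\omega$, so $et^\omega=e$. By Lemma~\ref{l:stabilize} applied to $e$, this gives $et=e$. Since $se=s$, we get $st=set=se=s$. Combining the two directions, $st=s$ holds if and only if $d(s)\le c(t)$.

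The only delicate point is the forward direction, where $\J$-below must be converted into $\eL$-below. Proposition~\ref{p:J=L for idempotents} does exactly this for idempotents, so nothing further is needed. The hypothesis $s\in sS$ is used only to make $T_s$ nonempty, which ensures that $d(s)$ is defined.
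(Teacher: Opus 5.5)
Your proof is correct, and the ideal $Se$ you construct is exactly the paper's $d(s)$, but you reach it by a different route.

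The paper defines $d(s)$ outright as the least element of $\Lambda(S)$ containing $s$. This minimum exists because $\Lambda(S)$ is closed under intersection (Proposition~\ref{p:meet.sml}) and the hypothesis $s\in sS$ places $s$ in some $St^\omega$. Both directions then follow from the single chain $st=s\iff st^\omega=s\iff s\in St^\omega\iff d(s)\le c(t)$, using Lemma~\ref{l:stabilize}.

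You instead build a specific idempotent $e$ inside the right stabilizer $T_s$. You prove that $Se$ is the minimum of $c(T_s)$ via the $\J$-to-$\eL$ conversion of Proposition~\ref{p:J=L for idempotents}. You then handle the converse separately through $et=e$ and $st=set=se=s$. In effect, you inline, for the particular set $T_s$, the argument behind $Ss^\omega\cap St^\omega=S(st)^\omega$, rather than citing intersection-closure of $\Lambda(S)$.

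The paper's version is shorter and choice-free: there is no ordering of $T_s$ whose irrelevance must be checked, although you do check it. Yours builds $d(s)$ directly as $\bigwedge_{t\in T_s}c(t)$, the meet of the contents of the elements fixing $s$ on the right. This is the descent-style description underlying Schocker's weakly ordered monoids. The paper gets the same description only a posteriori from the biconditional.
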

\begin{proof}
    Let $s\in S$.  Then $st=s$ for some $t\in S$ by assumption.  Hence $st^\omega =s$, and so $s\in St^\omega$.  Since $\Lambda(S)$ is closed under intersection, there is a unique minimum element $d(s)\in \Lambda(S)$ with $s\in d(s)$.  Then by Lemma~\ref{l:stabilize}, we have $st=s\iff st^\omega=s\iff s\in St^\omega\iff d(s)\leq c(t)$.
    \end{proof}

    It follows immediately from the definitions   that $c(s)\subseteq S^1s\subseteq d(s)$. Moreover, by Proposition~\ref{p:L.to.idem} either $s$ is an idempotent and both containments are equalities, or $s$ is not an idempotent and both containments are strict. 

\section{$\R$-trivial semigroup algebras}
Fix a commutative ring $k$ and an $\R$-trivial semigroup $S$.  If $S$ is a left regular band, meanining it consists only of idempotents, then the semigroup algebra $kS$ is unital if and only if $S$ is connected~\cite[Theorem~4.15]{ourmemoirs}.  We extend this result to the general case here. 


Let $P$ be a poset and $p\in P$.  Then $P_{\leq p} = \{q\in P\mid q\leq p\}$.
By a celebrated result of Solomon~\cite{Burnsidealgebra}, if $L$ is a meet semilattice, then $kL\cong k^L$, the isomorphism sending $\ell\in L$ to the characteristic function $\delta_{L_{\leq \ell}}$.  Let $c\colon kS\to k\Lambda(S)$ be the linear extension of $c$ and let $\tau\colon kS\to k^{\Lambda(S)}$ be the composition of $c$ with the Solomon isomorphism: $\tau(s)=\delta_{\Lambda(S)_{\leq c(s)}}$.  Note that $\ker c=\ker \tau$.

A crucial fact for us is the following theorem, proved for monoids  over a field in~\cite{AMSV} without an explicit nilpotency bound and with a bound in~\cite{SchockerRtrivial}. 

\begin{Thm}\label{t:nil}
Let $k$ be a commutative ring, let $S$ be an $\mathscr R$-trivial semigroup satisfying $s\in sS$ for all $s\in S$, and let $c\colon kS\to k\Lambda(S)$ be the induced map.  If $\mathfrak J=\ker c$, then $\mathfrak J^{n+2}=0$ where $n$ is the length of the longest $\leq_{\R}$-chain in $S$.     
\end{Thm}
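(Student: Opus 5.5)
The plan is to let $\mathfrak J$ act on $kS$ by right multiplication and show that this action strictly lowers elements in the $\leq_{\R}$-order. The decisive input is Proposition~\ref{p:descent}, which applies because $s\in sS$ for all $s\in S$.

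First I describe $\mathfrak J$ concretely. Write $x=\sum_{t\in S}a_tt\in kS$. Then $c(x)=\sum_{\lambda\in\Lambda(S)}\bigl(\sum_{c(t)=\lambda}a_t\bigr)\lambda$. Since $\Lambda(S)$ is a basis of $k\Lambda(S)$, we get $x\in\mathfrak J$ if and only if $\sum_{c(t)=\lambda}a_t=0$ for every $\lambda\in\Lambda(S)$.

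The key step is the following claim: for $s\in S$ and $x\in\mathfrak J$, the product $sx$ is a $k$-linear combination of elements $u\in S$ with $u<_{\R}s$ strictly. To prove it, expand $sx=\sum_ta_t\,st$. Every $st$ satisfies $st\leq_{\R}s$. By $\R$-triviality, either $st=s$ or $st<_{\R}s$. By Proposition~\ref{p:descent}, $st=s$ holds exactly when $d(s)\leq c(t)$, and this condition depends only on $c(t)$. So the total coefficient of $s$ coming from these terms is
\[\sum_{\lambda\geq d(s)}\ \sum_{c(t)=\lambda}a_t=0.\]
All remaining terms $st$ are strictly $\R$-below $s$. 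One should also check that no term $st\neq s$ can equal $s$ or lie above it. This is immediate, since $st\leq_{\R}s$ and $\leq_{\R}$ is a partial order.

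Now I iterate. Take $x_1,\dots,x_m\in\mathfrak J$ and write $x_1$ as a combination of elements $s_0\in S$. Applying the claim to $s_0x_2$ gives a combination of elements $s_1<_{\R}s_0$. Applying it again to each $s_1x_3$ by linearity gives elements $s_2<_{\R}s_1$, and so on. Hence $x_1x_2\cdots x_m$ is a combination of endpoints $s_{m-1}$ of strict chains $s_0>_{\R}s_1>_{\R}\cdots>_{\R}s_{m-1}$ of length $m-1$. Such chains have length at most $n$. So if $m-1>n$, that is $m\geq n+2$, there are no such chains and the product is $0$. Since $\mathfrak J^{n+2}$ is spanned by such products, $\mathfrak J^{n+2}=0$. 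This argument never uses that $k$ is a field, so it works over any commutative ring.

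The main obstacle is the vanishing of the coefficient of $s$ in $sx$. That is exactly where the descent map $d$ is needed: the stabilizer condition $st=s$ must depend on $t$ only through $c(t)$. The hypothesis $s\in sS$ is what guarantees that $d$ exists. The rest is bookkeeping with the chain-length convention: I count strict steps, so a chain of length $n$ has $n+1$ elements.
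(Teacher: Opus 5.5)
Your proof is correct and is essentially the paper's argument. The paper filters $kS$ by the right ideals $kR_m$ spanned by the elements of $\R$-height at most $m$, and shows that $\mathfrak J$ annihilates each quotient $kR_m/kR_{m-1}$ because $st=s$ holds if and only if $d(s)\leq c(t)$; this is exactly your claim that $s\mathfrak J$ lies in the span of the elements strictly $\R$-below $s$, and, as in the paper, your iteration never uses that the first factor lies in $\mathfrak J$, so it in fact gives $kS\cdot\mathfrak J^{n+1}=0$.
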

\begin{proof}
Let $\hgt(s)$ be the length of the longest chain $s_0<_{\R}\cdots <_{\R} s_n=s$.  Note that $\hgt(st)\leq \hgt(s)$ with equality if and only if $st=s$, that is, $d(s)\leq c(t)$ by Proposition~\ref{p:descent}.   Thus the set $R_m = \{s\in S\mid \hgt(s)\leq m\}$  is a right ideal, and so we have a right ideal filtration $kR_0\subseteq \cdots\subseteq kR_n=kS$.  We claim that $kR_m/kR_{m-1}$ is annihilated by $\mathfrak J$ for $0\leq m\leq n$, where we interpret $kR_{-1}=0$.  Indeed,  if $\hgt(s)=m$, then by the above discussion if $t\in S$, we have that
\[(s+kR_{m-1}) t= \begin{cases}
 s+kR_{m-1}, & \text{if}\ d(s)\leq c(t)\\ kR_{m-1}, & \text{else}    
\end{cases}\]
depends only on $c(t)$.  Therefore, this module is inflated from  $k\Lambda(S)$, i.e., is annihilated by $\mathfrak J$.  It follows that $kS\cdot \mathfrak J^{n+1}=0$, and so $\mathfrak J^{n+2}=0$.
\end{proof}

The last line of the proof shows that if $kS$ is unital, then $\mathfrak J^{n+1}=0$.

When $k$ is a field, $\ker c=\rad(kB)$ as $k\Lambda(S)\cong k^{\Lambda(S)}$ is semisimple.

The following lemma is elementary and well known; see~\cite{bandquiver}.  

\begin{Lemma}\label{l:equiv}
Let $X$ be a set and $k$ a commutative ring.  Let $\equiv$ be an equivalence relation on $X$ and  $R\subseteq {\equiv}$ a binary relation.  Let $V$ be the submodule of $kX$ spanned by all differences $x-y$ with $x\equiv y$.  Then $R$ generates $\equiv$ if and only if $V$ is spanned by all differences $x-y$ with $(x,y)\in R$.  
\end{Lemma}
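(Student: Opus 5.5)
The plan is to prove the two implications separately. Both sides live inside $V$. Write $V_R$ for the submodule spanned by the differences $x-y$ with $(x,y)\in R$. Since $R\subseteq{\equiv}$, we have $V_R\subseteq V$ automatically. Let $\sim$ denote the equivalence relation generated by $R$; then ${\sim}\subseteq{\equiv}$.

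For the forward direction, assume $R$ generates $\equiv$. It suffices to show that $x-y\in V_R$ whenever $x\equiv y$. By assumption, $x$ and $y$ are joined by a finite chain $x=x_0,x_1,\dots,x_m=y$ in which each consecutive pair satisfies $(x_{i-1},x_i)\in R$ or $(x_i,x_{i-1})\in R$. The sum $x-y=\sum_{i=1}^m (x_{i-1}-x_i)$ then telescopes, and each term is $\pm$ a generator of $V_R$. Hence $V\subseteq V_R$, so $V=V_R$.

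For the converse, assume $V=V_R$ and take $x\equiv y$; we must show $x\sim y$. Consider the $k$-linear map $\varphi\colon kX\to k[X/{\sim}]$ sending each $z$ to its $\sim$-class. Each generator of $V_R$ lies in $\ker\varphi$, because $(a,b)\in R$ forces $a\sim b$. Hence $x-y\in V=V_R\subseteq\ker\varphi$, which gives $[x]_\sim=[y]_\sim$ in the free module $k[X/{\sim}]$.

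The only delicate point is the last step, passing from equality in $k[X/{\sim}]$ to $x\sim y$. Distinct classes are distinct basis elements of a free module, so their difference is nonzero provided $1\neq 0$ in $k$. I will therefore assume $k\neq 0$, which is implicit in the statement. Over the zero ring the converse fails: every $V$ is zero, so $V=V_R$ holds for any $R$, including one that does not generate $\equiv$. Under this assumption $[x]_\sim=[y]_\sim$, that is, $x\sim y$, so ${\equiv}\subseteq{\sim}$ and hence ${\equiv}={\sim}$.
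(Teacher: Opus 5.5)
Your proof is correct. It is the standard argument: telescoping along an $R$-chain for one direction, and for the other the map $kX\to k[X/{\sim}]$ onto the free module on the classes of the equivalence $\sim$ generated by $R$. The paper itself gives no proof and simply cites the lemma as elementary and well known.

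Your caveat is also right. Over the zero ring $V=V_R=0$ for every $R$, so the ``if'' direction genuinely needs $k\neq 0$. This is harmless for the paper, which only invokes that direction over $\mathbb Z$, in the proof of (2)$\Rightarrow$(1) of Theorem~\ref{t:right.connected}.
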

%

Our next proposition is from~\cite{bandquiver}.

\begin{Prop}\label{p:left.id}
Let $S$ be a finite semigroup and $k$ a commutative ring. Let $J$ be a nilpotent two-sided ideal of $kS$ such that $kS/J$ has a left identity.   Then $kS$ has a left identity if and only  if $J=kS\cdot J$.
\end{Prop}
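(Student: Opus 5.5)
Both directions reduce to elementary manipulations with a nilpotent ideal. Let $A=kS$; it is finitely generated over $k$ since $S$ is finite, but we will not need this.

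The forward direction is immediate. If $u\in A$ is a left identity, then $u$ is an element of $kS$. For every $x\in J$ we have $x=ux\in kS\cdot J$. Hence $J\subseteq kS\cdot J\subseteq J$, the last inclusion because $J$ is an ideal, and so $J=kS\cdot J$.

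For the converse, assume $J=kS\cdot J$ and fix $\bar u\in A/J$ that is a left identity of $A/J$. Lift it to some $u\in A$. Then $ua-a\in J$ for every $a\in A$. The plan is to correct $u$ step by step until it becomes an honest left identity.

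1. Iterating $J=A\cdot J$ gives $J=A^mJ$ for all $m$. Since $J^N=0$ for some $N$, the aim is to exploit $(1-u)$ formally. Set $L=\{a\in A: ua=a\}$ and consider the operator $\lambda\colon a\mapsto a-ua$, which maps $A$ into $J$.

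2. I claim $\lambda$ is nilpotent. For $x\in J$, write $x=\sum a_i y_i$ with $a_i\in A$ and $y_i\in J$. Then
\[\lambda(x)=\sum (a_i-ua_i)y_i\in J\cdot J=J^2.\]
Inductively, using $J^m=A J^m$ (which follows from $J=AJ$), the same computation gives $\lambda(J^m)\subseteq J^{m+1}$. Hence $\lambda^{N}(A)\subseteq\lambda^{N-1}(J)\subseteq J^N=0$.

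3. Now take the formal expression for an element $e$ with $1-e=(1-u)^N$, namely
\[e=\sum_{i=1}^N (-1)^{i+1}\binom Ni u^i\in A.\]
For every $a\in A$ we get $a-ea=(1-u)^Na=\lambda^N(a)=0$, so $e$ is a left identity of $A$.

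The main obstacle is step 2: showing that left multiplication by $1-u$ pushes $J^m$ into $J^{m+1}$. This is exactly where the hypothesis $J=kS\cdot J$ is used, since it lets us write elements of $J^m$ as sums of products $a\,y$ with $a\in A$ and $y\in J^m$. The inductive form $J^m=AJ^m$ follows from $J^m=J\cdot J^{m-1}=AJ\cdot J^{m-1}$.
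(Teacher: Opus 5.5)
Your proof is correct and complete. The paper does not prove this proposition itself; it quotes it from the author's earlier work on band algebras, so there is no in-paper argument to compare yours against.

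The key step checks out. From $J=AJ$ you get $J^m=AJ^m$, so writing $x=\sum a_iy_i\in J^m$ with $y_i\in J^m$ gives $x-ux=\sum(a_i-ua_i)y_i\in J^{m+1}$. Hence $\lambda^N=(\mathrm{id}-L_u)^N$ vanishes on $A$, where $L_u$ is left multiplication by $u$. Expanding binomially and using $L_u^i=L_{u^i}$ gives exactly $a-ea=\lambda^N(a)=0$.

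A natural alternative route would first lift the idempotent $\bar u$ (a left identity is idempotent) to an idempotent $e\in A$, using nilpotence of $J$. One then runs a Nakayama-type argument, computing formally in the unitization. Since $(1-e)a=(1-e)^2a$ and $(1-e)a\in J=AJ$, one gets $(1-e)A\subseteq(1-e)A\cdot J\subseteq\cdots\subseteq(1-e)A\cdot J^N=0$.

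Your approach has several advantages over that route:
\begin{itemize}
\item It avoids lifting idempotents in a possibly nonunital ring.
\item It gives an explicit left identity $e=1-(1-u)^N$ from an arbitrary lift $u$. This $e$ automatically lifts $\bar u$, since $\bar u^i=\bar u$.
\item It never uses that $S$ is finite or that $k$ is commutative. So it holds for any associative ring with a nilpotent ideal $J$ such that the quotient by $J$ has a left identity.
\end{itemize}

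The set $L$ and the identity $J=A^mJ$ in your step 1 are never used and can be dropped.
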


Of course, the dual to Proposition~\ref{p:left.id} for right identities holds.  We shall apply the proposition and its dual to $\ker c$ since $k\Lambda(S)\cong k^{\Lambda(S)}$ is unital.

Call an $\mathscr R$-trivial semigroup $S$ \emph{connected} if $s\in sS$ for all $s\in S$ and if the equivalence relation $\equiv$ given by $s\equiv t$ if $c(s)=c(t)$ is generated by all pairs $(s,t)$ such that $c(s)=c(t)$ and there is $x\in S$ with $xs=s$ and $xt=t$.

 Fix once and for all, for each $X\in \Lambda(S)$, an idempotent $f_X$ with $X=Sf_X$. 

\begin{Thm}\label{t:right.connected}
Let $S$ be an $\R$-trivial semigroup.  Then the following are equivalent.
\begin{enumerate}
    \item $S$ is connected.
    \item $\mathbb ZS$ has an identity.
    \item $kS$ is unital for all commutative rings $k$.
\end{enumerate}
\end{Thm}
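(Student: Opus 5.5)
The plan is to prove (1)$\Rightarrow$(3), note (3)$\Rightarrow$(2) is trivial, and then prove (2)$\Rightarrow$(1). Throughout, $\mathfrak J=\ker c$, which is nilpotent by Theorem~\ref{t:nil} once $s\in sS$ for all $s$. I first identify $\mathfrak J$ concretely. An element $\sum a_s s$ lies in $\ker c$ if and only if, for each $X\in\Lambda(S)$, the coefficients over the fibre $c^{-1}(X)$ sum to zero. Hence $\mathfrak J$ is exactly the submodule $V$ spanned by the differences $s-t$ with $s\equiv t$, i.e.\ $c(s)=c(t)$. Since $kS/\mathfrak J\cong k^{\Lambda(S)}$ is unital, Proposition~\ref{p:left.id} and its dual reduce (3) to checking $\mathfrak J=kS\cdot\mathfrak J$ and $\mathfrak J=\mathfrak J\cdot kS$. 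Existence of both a left and a right identity then gives a two-sided identity.

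For (1)$\Rightarrow$(3), the left condition comes from connectedness. By Lemma~\ref{l:equiv}, $V$ is spanned by the differences $s-t$ for pairs with $c(s)=c(t)$ and a common $x$ satisfying $xs=s$ and $xt=t$. Each such generator equals $x(s-t)\in kS\cdot\mathfrak J$.

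For the right condition I use Proposition~\ref{p:descent}, which applies because $s\in sS$. Write $s-t=(s-f_{c(s)})-(t-f_{c(t)})$ when $c(s)=c(t)$, so it suffices to show $s-f_{c(s)}\in\mathfrak J\cdot kS$.
\begin{itemize}
\item We have $s f_{d(s)}=s$, since $d(s)\le c(f_{d(s)})$.
\item We have $c(s)\subseteq d(s)=Sf_{d(s)}$, so $f_{c(s)}f_{d(s)}=f_{c(s)}$.
\end{itemize}
Therefore $s-f_{c(s)}=(s-f_{c(s)})f_{d(s)}$, which lies in $\mathfrak J\cdot kS$. This proves (3).

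For (2)$\Rightarrow$(1), let $u=\sum a_x x$ be the identity of $\mathbb ZS$. From $su=s$, comparing the coefficient of $s$ shows that $sx=s$ for some $x$, so $s\in sS$. It remains to show that $\equiv$ is generated by the relation $R$ of pairs $(s,t)$ with $c(s)=c(t)$ that have a common left stabilizer. Let $\approx$ be the equivalence generated by $R$. For $s\equiv t$ we have
\[s-t=u(s-t)=\sum_x a_x(xs-xt),\]
where every pair $(xs,xt)$ satisfies $c(xs)=c(x)c(s)=c(xt)$. The terms with $xs=s$ and $xt=t$ are $R$-differences. For the remaining terms I would argue by induction on a suitable ordering of the pairs, for instance downward in $\le_{\J}$ or by $\hgt$ in the $\le_{\eL}$ order. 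The aim is that the remaining pairs $(xs,xt)$ are already known to lie in $\approx$. An alternative is to work in $\mathbb Z[S/{\approx}]$: show that $V$ maps into a submodule that $u$ kills modulo the $R$-differences, then apply Lemma~\ref{l:equiv}.

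I expect the main obstacle to be this last step. A product $xs$ may lie in the same $\J$-class as $s$ without equalling $s$, so a naive induction on $\le_{\J}$ is not obviously well founded. The first thing I would try is to pass to $u^N$ for large $N$. This lets me use $\mathfrak J^{n+1}=0$ and Proposition~\ref{p:J=L for idempotents}, the idea being to replace the coefficients $x$ by idempotent products $f$ with $f\le_{\J}$, which stabilize $s$ and $t$ simultaneously whenever $c(f)\ge c(s)$.
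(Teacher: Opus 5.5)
You prove (1)$\Rightarrow$(3) correctly, and essentially as the paper does. The left condition is argued identically. For the right condition you take $f_{d(s)}$ as a common right stabilizer of $s$ and $f_{c(s)}$. The paper instead takes any $t$ with $st=s$ and uses $s^\omega t=s^\omega$ together with $f_{c(s)}=f_{c(s)}s^\omega$. These are the same idea.

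The gap is in (2)$\Rightarrow$(1), which you leave unfinished. After writing $s-t=u(s-t)=\sum_x a_x(xs-xt)$, you count as $R$-differences only the terms where $x$ itself fixes both $s$ and $t$. For the remaining terms you propose an induction on $\le_{\J}$ (or a passage to $u^N$), which you do not carry out and which you yourself suspect is not well founded. As written, you have not shown that $\equiv$ is generated by $R$.

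The missing observation is that every pair $(xs,xt)$ already lies in $R$, so no induction is needed. What you need is a common left stabilizer of $xs$ and $xt$, not of $s$ and $t$.
\begin{enumerate}
\item Compare coefficients in $ux=x$, exactly as you did with $su=s$. This gives $y\in S$ with $yx=x$, so $x\in Sx$ for every $x\in S$.
\item Then $y(xs)=xs$ and $y(xt)=xt$.
\item Also $c(xs)=c(x)c(s)=c(x)c(t)=c(xt)$.
\end{enumerate}
Hence your displayed identity writes every generator $s-t$ of $V=\ker c$ as a $\mathbb Z$-combination of $R$-differences. Lemma~\ref{l:equiv} then shows that $R$ generates $\equiv$, so $S$ is connected.

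This is precisely the paper's argument. There it is phrased as follows: $\mathfrak J=\mathbb ZS\cdot\mathfrak J$ is spanned by the elements $st-su$ with $c(t)=c(u)$, and any $x$ with $xs=s$ left-stabilizes both $st$ and $su$.
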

\begin{proof}
Trivially, (3) implies (2).  Suppose that $\mathbb ZS$ has an identity.   We prove that $S$ is  connected. First observe that $s\in sS\cap Ss$ for all $s\in S$.  Indeed, if $1=\sum_{t\in S}c_tt$, then $\sum_{t\in S}c_tts=s=\sum_{t\in S}c_tst$ implies there are $t,u\in S$ with $s=ts=su$.   Letting $\mathfrak J=\ker c$, we trivially have that $\mathbb ZS\cdot \mathfrak J = \mathfrak J$. 
Thus $\mathfrak J$ is generated by all differences $s(t-u)=st-su$ with $s,t,u\in S$ and $c(t)=c(u)$. Then $c(st)=c(su)$ and if $xs=s$ with $x\in S$, then $xsu= su$ and $xst=st$.
  It follows from Lemma~\ref{l:equiv} that the equivalence relation induced by $c$ is generated by all pairs $(x,x')$ such that $c(x)=c(x')$, and $yx=x$ and $yx'=x'$ for some $y\in S$.  Therefore, $S$ is connected.

Assume (1). Note that 
$\mathfrak J$ is nilpotent by Theorem~\ref{t:nil} and $kS/\mathfrak J\cong k\Lambda(S)\cong k^{\Lambda(S)}$ has an identity. 
We show that $kS$ has a right and left identity, and therefore a two-sided identity, using Proposition~\ref{p:left.id} and its dual.    Note that $\mathfrak J$ has basis all elements of the form $s-f_{c(s)}$ with $s\neq f_{c(s)}$.   To obtain a right identity, it suffices to show that $\mathfrak J= \mathfrak J\cdot kS$ by the dual of Proposition~\ref{p:left.id}.  Let $s\in S$ and let $st=s$ with $t\in S$ by connectedness.  Then $s^\omega t =s^\omega$ and $Ss^{\omega} = c(s) = Sf_{c(s)}$.  It follows that $(s-f_{c(s)})t = (s-f_{c(s)}s^\omega)t = s-f_{c(s)}s^\omega=s-f_{c(s)}$.  We conclude that $\mathfrak J\cdot kS=\mathfrak J$, and so $kS$ has a right identity. 

 By Proposition~\ref{p:left.id}, to obtain a left identity it suffices to show that $\mathfrak J=kS\cdot \mathfrak J$.  By our connectivity assumption, the equivalence relation induced by $c$ is generated by all pairs $(x,x')$ such that $c(x)=c(x')$ and there exists $y\in S$ with $yx=x$ and $yx'=x'$.  Therefore, by Lemma~\ref{l:equiv}, $\mathfrak J$ is spanned by all differences $x-x'$ such that $yx=x$, $yx'=x'$ for some $y\in S$ and  $c(x)=c(x')$.  Then $x-x'=y(x-x')\in kS\cdot \mathfrak J$.  We conclude that $\mathfrak J=kS\cdot \mathfrak J$, and hence $kS$ has a left identity.  This completes the proof.
\end{proof}

The author first learned of the condition for which $\mathbb ZS$ has a right identity from Itamar Stein.  The condition for a two-sided identity is new.

From here on out, we assume that $S$ is connected.  Recall that $\tau\colon \mathbb ZS\to \mathbb Z^{\Lambda(S)}$ is the surjection with kernel $\mathfrak J$ given by $\tau(s) = \delta_{\Lambda(S)_{\leq c(s)}}$.  Since $\mathfrak J$ is nilpotent, we can find a set of pairwise orthogonal idempotents $\{e_X\in\mathbb ZS\mid X\in \Lambda(S)\}$ such that $\tau(e_X)=\delta_X$ and $\sum_{X\in \Lambda(S)}e_X=1$, cf.~\cite[Proposition 21.25]{LamBook}.  

We denote by $\chi_X\colon S\to \mathbb Z$ the multiplicative homomorphism given by \[\chi_X(s) = \begin{cases} 1, & \text{if}\ c(s)\geq X,\\ 0, & \text{else}\end{cases}\] which can be extended linearly to a homomorphism $\chi_X\colon \mathbb ZS\to \mathbb Z$.
Notice that $\tau(s)\delta_X = \chi_X(s)\delta_X$ for $s\in S$, and so $\chi_X(e_Y) = \delta_{X,Y}$ (Kronecker delta).

\begin{Prop}\label{p:support}
If $X\in \Lambda(S)$ and $e_X = \sum_{s\in S}n_ss$, then $\sum_{c(s)\geq X}n_s=1$.   \end{Prop}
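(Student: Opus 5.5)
The claim follows directly from the character $\chi_X$ introduced just before the statement. Apply $\chi_X$ to $e_X=\sum_{s\in S}n_ss$. On the one hand, $\chi_X(e_X)=\delta_{X,X}=1$ by the observation preceding the proposition. On the other hand, linearity of $\chi_X$ and its definition on basis elements give
\[\chi_X(e_X)=\sum_{s\in S}n_s\chi_X(s)=\sum_{c(s)\geq X}n_s.\]
Comparing the two expressions yields $\sum_{c(s)\geq X}n_s=1$.

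The only point needing care is the justification of $\chi_X(e_Y)=\delta_{X,Y}$, which I would verify explicitly. First, $\tau$ and the multiplication of $\mathbb Z^{\Lambda(S)}$ are $\mathbb Z$-linear. Second, for a basis element $s$ we have $\tau(s)\delta_X=\delta_{\Lambda(S)_{\leq c(s)}}\delta_X$. This equals $\delta_X$ exactly when $X\leq c(s)$ and is $0$ otherwise, so it equals $\chi_X(s)\delta_X$. Extending linearly, $\tau(a)\delta_X=\chi_X(a)\delta_X$ for all $a\in\mathbb ZS$. Taking $a=e_Y$ gives $\delta_Y\delta_X=\chi_X(e_Y)\delta_X$. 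The left side is $\delta_{X,Y}\delta_X$, and $\delta_X\neq 0$ is torsion-free in $\mathbb Z^{\Lambda(S)}$, so $\chi_X(e_Y)=\delta_{X,Y}$.

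I would also note that $\chi_X$ is multiplicative on $S$, because $c(st)=c(s)\wedge c(t)\geq X$ if and only if both $c(s)\geq X$ and $c(t)\geq X$. This is not needed for the computation above, only for $\chi_X$ to be a ring homomorphism. I expect no real obstacle: the proof is a single evaluation of $\chi_X$.
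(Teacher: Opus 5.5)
Your proposal is correct and matches the paper's proof: evaluate $\chi_X$ on $e_X=\sum_{s\in S}n_ss$ and compare with $\chi_X(e_X)=1$. Your explicit check of $\chi_X(e_Y)=\delta_{X,Y}$ via $\tau(s)\delta_X=\chi_X(s)\delta_X$ is the same observation the paper states just before the proposition, written out in full.
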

\begin{proof}
Note that $1=\chi_X(e_X) = \sum_{s\in S}n_s\chi_X(s) = \sum_{c(s)\geq X} n_s$.    
\end{proof}

As a consequence, we shall obtain an alternative basis for $\mathbb ZS$.

\begin{Cor}\label{c:right.fix}
    If $s\in S$, then $se_{d(s)} = s+\sum_{t<_{\R}s} n_tt$.  Hence the elements of the form $se_{d(s)}$ form a basis for $\mathbb ZS$.
\end{Cor}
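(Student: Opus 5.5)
Write $e_{d(s)}=\sum_{t\in S} n_t t$ and expand $se_{d(s)}=\sum_{t} n_t\, st$. The terms split according to whether $st=s$ or not. By Proposition~\ref{p:descent} (connectedness gives $s\in sS$), we have $st=s$ exactly when $c(t)\geq d(s)$. If $st\neq s$, then $st\leq_{\R} s$ with $st\neq s$, so $st<_{\R} s$ by $\R$-triviality. Hence
\[se_{d(s)}=\Bigl(\sum_{c(t)\geq d(s)} n_t\Bigr)s+\sum_{c(t)\not\geq d(s)} n_t\, st .\]
By Proposition~\ref{p:support} with $X=d(s)$, the coefficient in parentheses equals $1$. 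The remaining terms are integer combinations of elements strictly $\R$-below $s$. Collecting them gives the required form $s+\sum_{t<_{\R}s}n'_t t$.

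For the basis claim, fix a linear extension of the partial order $\leq_{\R}$ on the finite set $S$. With respect to this ordering, the transition matrix expressing $\{se_{d(s)}\}_{s\in S}$ in the basis $S$ of $\mathbb ZS$ is unitriangular over $\mathbb Z$. It is therefore invertible over $\mathbb Z$, so the elements $se_{d(s)}$ form a $\mathbb Z$-basis. Equivalently, one can argue by induction on $\hgt(s)$ that each $s$ lies in the span of the elements $ue_{d(u)}$.

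There is no serious obstacle. The only point needing care is that the comparison must be strict. Since $st\in sS$, we have $st\leq_{\R}s$, and $\R$-triviality rules out $st\,\R\, s$ with $st\neq s$. This ensures that no term other than those with $c(t)\geq d(s)$ contributes to the coefficient of $s$.
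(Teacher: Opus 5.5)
Your proposal is correct and follows essentially the same route as the paper. You expand $e_{d(s)}$, use Proposition~\ref{p:descent} to identify exactly the terms with $st=s$, and use Proposition~\ref{p:support} to see that their coefficients sum to $1$. You then conclude by noting that the change-of-basis matrix is unitriangular with respect to a linear extension of $\leq_{\R}$.
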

\begin{proof}
Note that $st=s$ if and only if $c(t)\geq d(s)$ by Proposition~\ref{p:descent}, and so by Proposition~\ref{p:support} we have that if $e_{d(s)} = \sum_{t\in S}n_tt$, then
\[se_{d(s)} = \sum_{c(t)\geq d(s)}n_ts+\sum_{st<_{\R} s}n_tst=s+\sum_{st<_{\R} s}n_tst,\] as required.    If we choose a total order on $S$ extending $\leq_{\R}$, then the homomorphism $s\mapsto se_{d(s)}$ has an upper triangular matrix with ones along the diagonal, and hence is invertible.  The second statement follows.
\end{proof}

This basis leads to a natural basis for $\mathfrak J$.

\begin{Cor}\label{c:rad.basis}
Let $S$ be a connected $\mathscr R$-trivial semigroup and $\mathfrak J=\ker c$.  Then $\mathfrak J$ has basis all elements $se_{d(s)}$ with $s\neq s^2$ and $(s-f_{c(s)})e_{c(s)}$ with $s=s^2$, $s\neq f_{c(s)}$.    
\end{Cor}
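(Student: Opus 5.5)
Since the proposed basis consists of $|S|-|\Lambda(S)|$ elements, and $\mathfrak J$ is a direct summand of rank $|S|-|\Lambda(S)|$ (because $\mathbb Z S/\mathfrak J\cong \mathbb Z^{\Lambda(S)}$ is free), I would argue in three steps: show the listed elements lie in $\mathfrak J$, show they are linearly independent, and show they span $\mathfrak J$. For spanning I would exhibit a basis of all of $\mathbb ZS$ that contains these elements together with $|\Lambda(S)|$ further elements mapping to a basis of $\mathbb Z^{\Lambda(S)}$ under $\tau$.

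\emph{Membership in $\mathfrak J$.} Take $s$ with $s\neq s^2$. Then $\tau(se_{d(s)})=\tau(s)\delta_{d(s)}=\chi_{d(s)}(s)\delta_{d(s)}$. Now $\chi_{d(s)}(s)=1$ exactly when $c(s)\geq d(s)$. Since $c(s)\subseteq d(s)$ always holds, this forces $c(s)=d(s)$, and by the remark after Proposition~\ref{p:descent} that happens only when $s$ is idempotent. Hence $se_{d(s)}\in\mathfrak J$ for nonidempotent $s$. For $s=s^2$ with $s\neq f_{c(s)}$, we have $c(s)=c(f_{c(s)})$, so $s-f_{c(s)}\in\mathfrak J$. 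As $\mathfrak J$ is an ideal, $(s-f_{c(s)})e_{c(s)}\in\mathfrak J$.

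\emph{Completing to a basis of $\mathbb ZS$.} Corollary~\ref{c:right.fix} says that the elements $se_{d(s)}$ form a basis of $\mathbb ZS$, and for an idempotent $s$ we have $d(s)=c(s)$. So the basis of Corollary~\ref{c:right.fix} consists of
\begin{itemize}
\item the elements $se_{d(s)}$ with $s$ nonidempotent, and
\item the elements $se_{c(s)}$ with $s$ idempotent.
\end{itemize}
I would replace each idempotent basis element $se_{c(s)}$ with $s\neq f_{c(s)}$ by $(s-f_{c(s)})e_{c(s)}=se_{c(s)}-f_{c(s)}e_{c(s)}$. Here $f_{c(s)}e_{c(s)}$ is itself a basis element, namely the one indexed by the idempotent $f_{c(s)}$, which is not being replaced. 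This change of basis is unitriangular, so the new family is still a $\mathbb Z$-basis of $\mathbb ZS$. It consists of the listed elements together with the elements $f_Xe_X$ for $X\in\Lambda(S)$.

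\emph{Conclusion.} We have $\tau(f_Xe_X)=\chi_X(f_X)\delta_X=\delta_X$, because $c(f_X)=X$. So the elements $f_Xe_X$ map onto the standard basis of $\mathbb Z^{\Lambda(S)}$. Now let $a\in\mathfrak J$ and expand it in the new basis. Applying $\tau$ kills all the listed elements, since they lie in $\mathfrak J$. What remains is the sum of the coefficients of the $f_Xe_X$ times $\delta_X$, which must be $0$, so all those coefficients vanish. Therefore $a$ lies in the span of the listed elements. These elements are independent as part of a basis, so they form a basis of $\mathfrak J$.

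The one delicate point is the membership step, specifically the fact that $c(s)\neq d(s)$ for nonidempotent $s$; this is supplied by Proposition~\ref{p:L.to.idem}.
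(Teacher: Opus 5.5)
Your proof is correct and takes essentially the paper's approach. Both arguments rest on the basis $\{se_{d(s)}\}$ of Corollary~\ref{c:right.fix} and on evaluating $\tau$, and your unitriangular change of basis to the listed elements together with the $f_Xe_X$ is a repackaging of the paper's step of showing $\sum_{Ss=X}n_s=0$ and then subtracting $\sum_{X}\bigl(\sum_{Ss=X}n_s\bigr)f_Xe_X$ from $a$ (your membership step via $c(s)\subsetneq d(s)$ and the ideal property of $\mathfrak J$ is an equally valid variant of the paper's direct $\tau$ computations using Proposition~\ref{p:descent}).
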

\begin{proof}
If $s^2\neq s$, then $d(s)\nleq c(s)$ by Proposition~\ref{p:descent}.  Therefore, $\tau(se_{d(s)}) =\tau(s)\delta_{d(s)} = \chi_{d(s)}(s)\delta_{d(s)} =0$.  Thus $se_{d(s)}\in \mathfrak J$.   If $s=s^2$, then \[\tau((s-f_{c(s)})e_{c(s)})= \chi_{c(s)}(s)\delta_{c(s)}-\chi_{c(s)}(f_{c(s)})\delta_{c(s)} = 0,\] and so $(s-f_{c(s)})e_{c(s)}\in \mathfrak J$.  These elements are clearly linearly independent by Corollary~\ref{c:right.fix}.

Suppose that $a=\sum_{s\in S}n_sse_{d(s)}\in \mathfrak J$.   Note that if $s=s^2$, then $c(s)=d(s)=Ss$.  Therefore, $\tau(se_{d(s)}) = \chi_{c(s)}(s)\delta_{c(s)} = \delta_{c(s)}$.  Thus, \[0 = \tau(a) = \sum_{s=s^2}n_s\tau(se_{d(s)}) =\sum_{X\in \Lambda(S)}\sum_{Ss=X}n_s\delta_X\] where we have used Proposition~\ref{p:J=L for idempotents}.  In particular, for each $X\in \Lambda(S)$,  we have that \[\sum_{Ss=X}n_s=0.\]  We conclude that 
\[a=\sum_{s\in S}n_se_{d(s)}-\sum_{X\in \Lambda(S)}\sum_{Ss=X}n_sf_Xe_X=\sum_{s\neq s^2} n_sse_{d(s)} +\sum_{s=s^2}n_s(s-f_{c(s)})e_{c(s)}\] as required.
\end{proof}

The situation is different for $se_X$ when $X\neq d(s)$.

\begin{Prop}\label{p:wrong.support}
Let $X\in \Lambda(S)$ and $d(s)\neq X$.  Then $se_X=\sum_{t<_{\R}s}n_tt$.    Moreover, if $s=s^2$, then $c(t)<c(s)$ for all $t$ with $n_t\neq 0$. 
\end{Prop}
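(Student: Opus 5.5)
Write $e_X=\sum_{t\in S}n_tt$, so that $se_X=\sum_t n_t\,st$. By Proposition~\ref{p:descent}, $st=s$ exactly when $c(t)\geq d(s)$, and otherwise $st<_{\R}s$. So the coefficient of $s$ in $se_X$ is $\sum_{c(t)\geq d(s)}n_t$, and it suffices to show that this sum is zero. It equals $\chi_{d(s)}(e_X)$, and we know $\chi_{d(s)}(e_X)=\delta_{d(s),X}=0$ because $d(s)\neq X$. Hence $se_X=\sum_{st<_{\R}s}n_tst$. Collecting terms gives the first assertion: $se_X$ is a $\mathbb Z$-combination of elements strictly $\R$-below $s$.

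For the second assertion, assume $s=s^2$, so that $c(s)=d(s)=Ss$. The terms that survive in $se_X$ are the $st$ with $c(t)\not\geq c(s)$. For such $t$ we have $c(st)=c(s)\wedge c(t)$, since $c$ is a homomorphism to the meet semilattice $\Lambda(S)$. This meet is at most $c(s)$, and it equals $c(s)$ only if $c(t)\geq c(s)$, which is excluded. Therefore $c(st)<c(s)$ for every term appearing.

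One subtlety: distinct $t$ may give the same product $st$, so the "coefficients" in the statement are the collected sums. The statement says $c(t)<c(s)$ for every $t$ with nonzero collected coefficient. Every such element has the form $st$ with $c(t)\not\geq c(s)$ and $n_t\neq0$, so the bound holds for it. Renaming the collected coefficients gives exactly the form in the statement.

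The main step is recognizing that the coefficient of $s$ is $\chi_{d(s)}(e_X)$. This is the same computation as in Proposition~\ref{p:support}, now applied with $X\neq d(s)$ and using $\chi_Y(e_X)=\delta_{Y,X}$. Everything else follows from Proposition~\ref{p:descent} and the fact that $c$ is a homomorphism.
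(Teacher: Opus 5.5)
Your proposal is correct and follows essentially the same route as the paper's proof. You identify the coefficient of $s$ in $se_X$ as $\chi_{d(s)}(e_X)=0$, use Proposition~\ref{p:descent} to place the surviving terms $st$ strictly $\R$-below $s$, and for idempotent $s$ use $c(s)=d(s)$ to get $c(st)=c(s)c(t)<c(s)$; your remark about collecting coefficients just makes explicit the paper's mild reuse of the symbol $n_t$.
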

\begin{proof}
Let $e_{X} = \sum_{t\in S}n_tt$.
    Observe that $0=\chi_{d(s)}(e_X)=\sum_{c(t)\geq d(s)} n_t$.  Therefore, we have that \[se_X = \sum_{c(t)\geq d(s)}n_ts+\sum_{c(t)\ngeq d(s)}n_tst = \sum_{c(t)\ngeq d(s)}n_tst. \]  But, $st<_{\R} s$ for $c(t)\ngeq d(s)$, and if $s=s^2$, then $c(st)=c(s)c(t)=d(s)c(t)<d(s)=c(s)$.  This completes the proof.
\end{proof}

\section{Quivers, path algebras and admissible ideals}
A \emph{quiver} $Q$ is a finite directed graph, with multiple edges and loops allowed. We denote the set of vertices by $Q_0$ and the set of edges by $Q_1$. If $v,w$ are vertices, then $Q_1(v,w)$ will denote the set of edges from $v$ to $w$.  

If $Q$ is a quiver and $k$ is a commutative ring with unit, the \emph{path algebra} $kQ$ has basis consisting of all directed paths in $Q$, including an empty path at each vertex $v$.  We compose paths from right to left, so that if we write $s(e)$ for the source of an edge and $t(e)$ for the target, then a path has the form $p=e_n\cdots e_2e_1$ where $t(e_i) = s(e_{i+1})$.  We put $s(p) = s(e_1)$ and $t(p)=t(e_n)$.  The multiplication in $kQ$ is then given by taking $pq$ to be the concatenation when $s(p)=t(q)$, and $pq=0$ otherwise. The identity of $kQ$ is the sum of the empty paths.

The arrow ideal $J_k$ is the ideal of $kQ$ with basis all paths of positive length.  When $k=\mathbb Z$ we just write $J$.   The exact sequence
\[0\to J\to \mathbb ZQ\to \mathbb ZQ_0\to 0\] splits over $\mathbb Z$ and hence
\[0\to k\otimes J\to kQ\to kQ_0\to 0\] is exact, and so $k\otimes J= J_k$, after identifying $k\otimes Q\cong kQ$ and $k\otimes Q_0\cong kQ_0$.  

An ideal $I$ of $kQ$ is \emph{admissible} if there exists $n\geq 2$ with $J_k^n\subseteq I\subseteq J_k^2$ and $kQ/I$ is projective over $k$.   This coincides with the standard definition when $k$ is a field~\cite{assem,benson}.  A pair $(Q,I)$ consisting of a quiver $Q$ and an admissible ideal $I$ is called a \emph{bound quiver} presenting the algebra $kQ/I$.

Suppose that $I$ is an admissible ideal of $\mathbb ZQ$.  Then \[0\to I\to \mathbb ZQ\to \mathbb ZQ/I\to 0\] is exact and $\mathbb ZQ/I$ is free abelian, so the sequence splits over $\mathbb Z$.  It follows 
\begin{equation*}\label{eq:tensor.over.k}
0\to k\otimes I\to kQ\to k\otimes \mathbb ZQ/I\to 0
\end{equation*}
is exact and $k\otimes \mathbb ZQ/I$ is a free $k$-module.  Moreover, $J_k^m = (k\otimes J)^m$. 
Therefore, if $J^n\subseteq I\subseteq J^2$, then $J_k^n\subseteq k\otimes I\subseteq J_k^2$, and so $k\otimes I$ is admissible.  This leads to the following proposition from~\cite{bandquiver}.

\begin{Prop}\label{p:admiss.ov.z}
    Let $I$ be an admissible ideal of $\mathbb ZQ$.  Then for any commutative ring $k$, one has that $k\otimes I$ is an admissible ideal of $kQ$ and $kQ/(k\otimes I)\cong k\otimes \mathbb ZQ/I$.  
\end{Prop}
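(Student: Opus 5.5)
The argument has two parts: first show that $k\otimes I$ is an ideal of $kQ$ sandwiched between $J_k^n$ and $J_k^2$, and then identify the quotient with $k\otimes \mathbb ZQ/I$, which gives projectivity. Throughout we identify $k\otimes \mathbb ZQ$ with $kQ$; this is legitimate because $\mathbb ZQ$ is free abelian on the set of paths.

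\emph{Step 1 (exactness and freeness).} Since $I$ is admissible, $\mathbb ZQ/I$ is free abelian. Hence $0\to I\to \mathbb ZQ\to \mathbb ZQ/I\to 0$ splits over $\mathbb Z$, and tensoring with $k$ gives a short exact sequence
\[0\to k\otimes I\to kQ\to k\otimes \mathbb ZQ/I\to 0.\]
In particular, $k\otimes I\to kQ$ is injective, so we may regard $k\otimes I$ as a submodule of $kQ$; it is the $k$-span of the image of $I$. It is a two-sided ideal, because $kQ$ is spanned by paths and $I$ is stable under multiplication by paths. The surjection $kQ\to k\otimes \mathbb ZQ/I$ is a ring homomorphism, being $k\otimes$ the quotient map. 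Its kernel is $k\otimes I$, so $kQ/(k\otimes I)\cong k\otimes \mathbb ZQ/I$. This quotient is free over $k$ because $\mathbb ZQ/I$ is free abelian, so it is projective.

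\emph{Step 2 (the sandwich).} As noted before the statement, $J$ is spanned by paths of positive length, so $J_k=k\otimes J$. Likewise $J^m$ is the $\mathbb Z$-span of the paths of length at least $m$, which is a direct summand of $\mathbb ZQ$. Its $k$-span is therefore $J_k^m$, since $J_k^m$ is the $k$-span of the paths of length at least $m$. Now $J^n\subseteq I\subseteq J^2$ inside $\mathbb ZQ$. Taking $k$-spans inside $kQ$, which preserves inclusions, gives $J_k^n\subseteq k\otimes I\subseteq J_k^2$.

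The only point needing care is the injectivity of $k\otimes I\to kQ$, which lets us identify $k\otimes I$ with an ideal of $kQ$. This comes from the $\mathbb Z$-splitting, and that splitting uses exactly the hypothesis that $\mathbb ZQ/I$ is free. Everything else is bookkeeping with path bases.
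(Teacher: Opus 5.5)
Your proposal is correct and follows essentially the same route as the paper: use that $\mathbb ZQ/I$ is free abelian to split $0\to I\to \mathbb ZQ\to \mathbb ZQ/I\to 0$ over $\mathbb Z$, then tensor with $k$ to get exactness and freeness of $k\otimes \mathbb ZQ/I$. You then transfer $J^n\subseteq I\subseteq J^2$ to $kQ$ using that $J_k^m$ is the $k$-span of $J^m$, which the paper phrases as $J_k^m=(k\otimes J)^m$; your extra checks that $k\otimes I$ is a two-sided ideal and that the quotient map is a ring homomorphism make explicit what the paper leaves implicit.
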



Next we turn to the question of when an algebra is a path algebra modulo an admissible ideal over a principal ideal domain.

If $A=kQ/I$ with $I$ an admissible ideal, then putting $\mathfrak J=J_k/I$, we have that $\mathfrak J$ is a nilpotent ideal (as it contains a power of $J_k$), $A/\mathfrak J\cong k^{Q_0}$ and $\mathfrak J/\mathfrak J^2\cong J_k/J_k^2$ is a finitely generated free $k$-module.  The converse was shown in~\cite{bandquiver} when $k$ is a principal ideal domain.  
\begin{Thm}\label{t:gabriel}
    Let $k$ be a principal ideal domain and $A$ a $k$-algebra such that:
    \begin{enumerate}
        \item $A$ is free over $k$;
        \item there is a nilpotent ideal $\mathfrak J$ of $A$ such that 
        \begin{enumerate}
            \item $A/\mathfrak J\cong k^X$ for some set $X$;
            \item $\mathfrak J/\mathfrak J^2$ is a finitely generated free $k$-module. 
        \end{enumerate}
    \end{enumerate}
   Then $A\cong kQ/I$ for a quiver $Q$ and an admissible ideal $I$ where $Q_0=X$ and $Q_1$ is in bijection with a basis for $\mathfrak J/\mathfrak J^2$. 

   More precisely, suppose that $\{e_x\mid x\in X\}$ is a complete set of orthogonal idempotents lifting $\{\delta_x\mid x\in X\}$ and $B_{yx}\subseteq e_y\mathfrak Je_x$ maps to a basis of the finitely generated free $k$-module $e_y(\mathfrak J/\mathfrak J^2)e_x$, for $x,y\in X$.   Define a quiver $Q$ by  $Q_0=X$, $Q_1(x,y) =B_{yx}$.  Then $\psi\colon kQ\to A$ given by $\psi(\varepsilon_x) = e_x$ and $\psi(b) =b$ for $b\in B_{yx}$ is surjective with kernel an admissible ideal.  
\end{Thm}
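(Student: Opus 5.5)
The approach is the standard Gabriel argument, run over a principal ideal domain; the only extra work is keeping track of freeness. First, the choices in the second paragraph are possible. Since $\mathfrak J$ is nilpotent, the idempotents $\delta_x$ of $A/\mathfrak J\cong k^X$ lift to a complete set of orthogonal idempotents $\{e_x\}$ (as in~\cite[Proposition 21.25]{LamBook}). Here $X$ is finite, since $A/\mathfrak J$ is unital and the $\delta_x$ sum to $1$. Then
\[
\mathfrak J/\mathfrak J^2=\bigoplus_{x,y}e_y(\mathfrak J/\mathfrak J^2)e_x .
\]
Each summand is a direct summand of a finitely generated free module over a PID, so it is free. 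We may therefore choose $B_{yx}\subseteq e_y\mathfrak Je_x$ lifting a basis. The map $\psi$ is well defined because $kQ$ is the free algebra on the orthogonal idempotents $\varepsilon_x$ and arrows $b$ with $b=\varepsilon_y b\varepsilon_x$, and these relations hold for the chosen elements of $A$. Under $\psi$, the arrow ideal satisfies $\psi(J_k^m)\subseteq \mathfrak J^m$, since arrows map into $\mathfrak J$.

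For surjectivity, I would first note that $A=\bigoplus_x ke_x\oplus \mathfrak J$ as $k$-modules. This holds because the $e_x$ map to the basis $\delta_x$ of $A/\mathfrak J$. Next, $\mathfrak J=\operatorname{span}B+\mathfrak J^2$, where $B=\bigcup B_{yx}$. Substituting this into itself gives
\[
\mathfrak J^m\subseteq \psi(\text{paths of length } m)+\mathfrak J^{m+1}.
\]
By induction, $\mathfrak J\subseteq \psi(J_k)+\mathfrak J^{N}$ for every $N$. Nilpotency of $\mathfrak J$ then gives $\psi(J_k)=\mathfrak J$, and hence $\psi$ is onto.

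For admissibility of $I=\ker\psi$, choose $n\geq 2$ with $\mathfrak J^n=0$. Then $\psi(J_k^n)\subseteq\mathfrak J^n=0$, so $J_k^n\subseteq I$. Also $kQ/I\cong A$ is free, hence projective.

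The step I expect to need the most care is $I\subseteq J_k^2$. Take $a\in I$ and write
\[
a=\sum_x c_x\varepsilon_x+\sum_{b\in B}c_b b+r,\qquad r\in J_k^2 .
\]
Apply $\psi$ and reduce modulo $\mathfrak J$. Since $\psi(r)$ and all $\psi(b)$ lie in $\mathfrak J$, we get $\sum c_x\delta_x=0$, so every $c_x=0$. Now reduce modulo $\mathfrak J^2$, using $\psi(r)\in\mathfrak J^2$. This gives $\sum c_b\bar b=0$ in $\mathfrak J/\mathfrak J^2$. Linear independence of the lifted bases, across the direct sum decomposition by $e_y(-)e_x$, forces every $c_b=0$. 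Hence $a=r\in J_k^2$, and $I$ is admissible, as required.
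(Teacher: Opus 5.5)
Your proof is correct and complete. It is the standard Gabriel-type argument: lift the $\delta_x$ to a complete set of orthogonal idempotents, and lift bases of the Peirce components $e_y(\mathfrak J/\mathfrak J^2)e_x$, which are free as direct summands of a finitely generated free module over a PID. Nilpotency of $\mathfrak J$ then gives surjectivity of $\psi$ and $J_k^n\subseteq\ker\psi$, and linear independence modulo $\mathfrak J^2$ gives $\ker\psi\subseteq J_k^2$. The paper does not prove this statement itself but quotes it from \cite{bandquiver}, so there is no in-paper argument to compare against.
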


    The quiver $Q$ in Theorem~\ref{t:gabriel} is uniquely determined up to isomorphism by $A$, as remarked in~\cite{bandquiver}.


\section{The integral quiver presentation}
Fix a connected $\mathscr R$-trivial semigroup $S$.   We continue to hold fixed the idempotents $e_X,f_X$ with $X\in \Lambda(S)$.  Since $\mathfrak J=\ker c$ is nilpotent and $\mathbb ZS/\mathfrak J\cong \mathbb Z^{\Lambda(S)}$ to obtain a bound quiver presentation of $\mathbb ZS$, we must, by Theorem~\ref{t:gabriel}, find bases for $e_Y(\mathfrak J/\mathfrak J^2)e_X$ for each $X,Y\in \Lambda(S)$, where we retain our previous notation.

\subsection{Derivations}
If $R$ is a ring and $M$ is an $R$-bimodule, then a \emph{derivation} of $M$ is an additive homomorphism $d\colon R\to M$ such that $d(rs) = rd(s)+d(r)s$.  If $R=\mathbb ZS$, then a derivation is precisely a linear extension of a map $d\colon S\to M$ satisfying $d(rs) = rd(s)+d(r)s$ for all $r,s\in S$.  We shall make use of derivations in our proof of linear independence.

If $X,Y\in \Lambda(S)$, then we can consider $\mathbb Z$ as a $\mathbb ZS$-bimodule $M_{X,Y}$ via $s\cdot n = \chi_Y(s)n$ and $n\cdot s = n\chi_X(s)$.  Note that both the left and right actions factor through $\Lambda(S)$, and so $\mathfrak J=\ker c$ annihilates $\mathbb Z$ on both sides. 

\begin{Prop}\label{p:annihilate}
    Let $d\colon S\to \mathbb Z$ be a derivation of $M_{X,Y}$.  
    \begin{enumerate}
        \item  $d(\mathfrak J^2)=0$.
        \item If $a\in \mathbb ZS$ belongs to $\ker \chi_X\cap \ker \chi_Y$, then $d(e_Yae_X) = d(a)$.
    \end{enumerate}   
\end{Prop}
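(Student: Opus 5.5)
Both parts follow directly from the Leibniz rule together with the description of the bimodule $M_{X,Y}$, where $s\cdot n=\chi_Y(s)n$ and $n\cdot s=n\chi_X(s)$. Extending linearly, for $a,b\in \mathbb ZS$ we have
\[d(ab)=\chi_Y(a)d(b)+d(a)\chi_X(b).\]

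For (1), we use that $\mathfrak J=\ker c$ and that both $\chi_X$ and $\chi_Y$ factor through $c$. Indeed, $\chi_X(s)$ depends only on whether $c(s)\geq X$, so $\chi_X$ is the composite of $c\colon \mathbb ZS\to\mathbb Z\Lambda(S)$ with the linear map $\mathbb Z\Lambda(S)\to \mathbb Z$ sending $Z\mapsto 1$ if $Z\geq X$ and $0$ otherwise. Hence $\chi_X(\mathfrak J)=\chi_Y(\mathfrak J)=0$. For $a,b\in\mathfrak J$ this gives $d(ab)=\chi_Y(a)d(b)+d(a)\chi_X(b)=0$. Since $\mathfrak J^2$ is additively spanned by such products and $d$ is additive, $d(\mathfrak J^2)=0$.

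For (2), we apply the Leibniz rule twice:
\[d(e_Yae_X)=\chi_Y(e_Y)d(ae_X)+d(e_Y)\chi_X(ae_X).\]
The second term vanishes because $\chi_X$ is a ring homomorphism and $\chi_X(a)=0$. We also have $\chi_Y(e_Y)=1$, since $\chi_X(e_Y)=\delta_{X,Y}$ as noted before Proposition~\ref{p:support}. Expanding again,
\[d(ae_X)=\chi_Y(a)d(e_X)+d(a)\chi_X(e_X)=0+d(a),\]
using $\chi_Y(a)=0$ and $\chi_X(e_X)=1$. Therefore $d(e_Yae_X)=d(a)$.

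The argument is routine. The only points needing care are the side conventions: the left action uses $\chi_Y$ and the right action uses $\chi_X$, so the hypotheses $a\in\ker\chi_X\cap\ker\chi_Y$ are exactly what kill the cross terms. The fact that $\chi_X$ and $\chi_Y$ are ring homomorphisms on $\mathbb ZS$ is what lets us evaluate them on the products $ae_X$ and on elements of $\mathfrak J$.
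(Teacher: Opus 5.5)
Your proof is correct and follows the paper's argument. For (1), both you and the paper use that $\mathfrak J$ acts by zero on both sides of $M_{X,Y}$; your observation that $\chi_X,\chi_Y$ factor through $c$ makes this explicit. For (2), your two-step Leibniz expansion, which kills the cross terms with $\chi_X(a)=\chi_Y(a)=0$ and $\chi_X(e_X)=\chi_Y(e_Y)=1$, is the paper's three-term expansion $d(e_Yae_X)=e_Yad(e_X)+e_Yd(a)e_X+d(e_Y)ae_X$ carried out in stages.
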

\begin{proof}
    To prove (1), if $x,y\in \mathfrak J$, then $d(xy) = xd(y)+d(x)y = 0$, as $\mathfrak J$ annihilates $M_{X,Y}$ on both sides.  For (2), $d(e_Yae_X) = e_Yad(e_X)+e_Yd(a)e_X+d(e_Y)ae_X = d(a)$ by assumption on $a$.
\end{proof}

In particular, if $T\subseteq \mathfrak J$ is such that we can find, for each $t\in T$, a derivation $d_t$ of $M_{X,Y}$ with $d_t(t')=\delta_{t,t'}$ (Kronecker delta) for all $t'\in T$, then the elements $t+\mathfrak J^2$ with $t\in T$ are linearly independent in $\mathfrak J/\mathfrak J^2$.

If $Z\in \Lambda(S)$, put $I_Z=\{s\in S\mid c(s)\ngeq Z\}$.  Note that $I_Z$ is a two-sided ideal of $S$ if nonempty.
Let $\sim_{X,Y}$ be the least equivalence relation on $f_YSf_X$ such that:
\begin{enumerate}
    \item $f_Ystf_X\sim_{X,Y} f_Ysf_X$ whenever $c(s)\ngeq Y$ and $c(t)\geq X$;
    \item $s\sim_{X,Y}t$ if $s,t\in I_YI_X$.
\end{enumerate}

\begin{Prop}\label{p:derivation.from.theta}
    Let $\theta\colon f_YSf_X\to \mathbb Z$ be a mapping such that $\theta$ is constant on $\sim_{X,Y}$-classes and vanishes on $I_YI_X$.  Then $d(s) = \theta(f_Ysf_X)-s\theta(f_Yf_X)$ defines a derivation of $M_{X,Y}$.
\end{Prop}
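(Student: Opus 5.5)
The plan is to verify the derivation identity directly on elements of $S$; this suffices because, as observed at the start of this subsection, a derivation of $\mathbb ZS$ is the linear extension of a map on $S$ satisfying the Leibniz rule on pairs of elements. Write $\theta_0=\theta(f_Yf_X)$. In $M_{X,Y}$ the left action of $s$ is multiplication by $\chi_Y(s)$ and the right action of $t$ is multiplication by $\chi_X(t)$. So the formula reads $d(s)=\theta(f_Ysf_X)-\chi_Y(s)\theta_0$, and we must show, for all $s,t\in S$,
\[
\theta(f_Ystf_X)-\chi_Y(st)\theta_0=\chi_Y(s)\bigl(\theta(f_Ytf_X)-\chi_Y(t)\theta_0\bigr)+\bigl(\theta(f_Ysf_X)-\chi_Y(s)\theta_0\bigr)\chi_X(t).
\]
All elements fed to $\theta$ lie in $f_YSf_X$, so everything is well defined. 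I will split into cases according to whether $c(s)\geq Y$ and whether $c(t)\geq X$.

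The key preliminary fact is that $c(s)\geq Y$ implies $f_Ys=f_Y$. Indeed, $Sf_Y\subseteq Ss^\omega$ gives $f_Y\leq_{\eL} s^\omega$, and Proposition~\ref{p:J=L for idempotents} then gives $f_Ys=f_Y$.

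\emph{Case $c(s)\geq Y$.} Here $\chi_Y(s)=1$ and $\chi_Y(st)=\chi_Y(t)$ since $\chi_Y$ is multiplicative. By the preliminary fact, $f_Ystf_X=f_Ytf_X$ and $f_Ysf_X=f_Yf_X$. The left side is therefore $\theta(f_Ytf_X)-\chi_Y(t)\theta_0$. The second term on the right becomes $(\theta_0-\theta_0)\chi_X(t)=0$, so both sides agree.

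\emph{Case $c(s)\ngeq Y$.} Now $\chi_Y(s)=\chi_Y(st)=0$, and the identity reduces to $\theta(f_Ystf_X)=\theta(f_Ysf_X)\chi_X(t)$.
\begin{itemize}
\item If $c(t)\geq X$, this is exactly relation~(1) in the definition of $\sim_{X,Y}$, together with the assumption that $\theta$ is constant on classes.
\item If $c(t)\ngeq X$, we need $\theta(f_Ystf_X)=0$. Note $c(f_Ys)\leq c(s)$, so $c(f_Ys)\ngeq Y$ and $f_Ys\in I_Y$. Similarly $c(tf_X)\leq c(t)$ gives $tf_X\in I_X$. Hence $f_Ystf_X\in I_YI_X$, where $\theta$ vanishes by hypothesis.
\end{itemize}

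The main point requiring care is the first case, where the conclusion $f_Ys=f_Y$ relies on $\R$-triviality through Proposition~\ref{p:J=L for idempotents}. I expect the remaining cases to be immediate from the defining relations of $\sim_{X,Y}$. Additivity of $d$ holds by construction, since $d$ is defined on $\mathbb ZS$ as the linear extension of its values on $S$.
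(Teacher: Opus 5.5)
Your proposal is correct and follows essentially the same route as the paper: reduce the Leibniz rule to an identity on elements of $S$, then split into the cases $c(s)\geq Y$ (where $f_Ys=f_Y$) and $c(s)\ngeq Y$, with the latter subdivided by whether $c(t)\geq X$ (relation (1)) or not (vanishing on $I_YI_X$). Your version just makes explicit the $\chi$'s, the derivation of $f_Ys=f_Y$ from Proposition~\ref{p:J=L for idempotents}, and why $f_Ystf_X\in I_YI_X$, all of which the paper leaves implicit.
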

\begin{proof}
We compute that
\begin{align*}
    d(st) &= \theta(f_Ystf_X)- st\theta(f_Yf_X)\\
    sd(t)+d(s)t &= s\theta(f_Ytf_X)-st\theta(f_Yf_X)+\theta(f_Ysf_X)t-s\theta(f_Yf_X)t
\end{align*}
and so we need to show that 
\begin{equation}\label{eq:derivation}
\theta(f_Ystf_X) = s\theta(f_Ytf_X)+\theta(f_Ysf_X)t-s\theta(f_Yf_X)t
\end{equation}
Suppose first that $c(s)\geq Y$.   Then $f_Ys=f_Y$ by Proposition~\ref{p:J=L for idempotents}.  So the left hand side of \eqref{eq:derivation} becomes $\theta(f_Ytf_X)$ and the right hand side becomes $\theta(f_Ytf_X)+\theta(f_Yf_X)t - \theta (f_Yf_X)t= \theta(f_Ytf_X)$.  

So now we suppose that $c(s)\ngeq Y$.  Then the right hand side of \eqref{eq:derivation} becomes $\theta(f_Ysf_X)t$.  If $c(t)\ngeq X$, then $f_Ystf_X\in I_YI_X$, and so $\theta(f_Ystf_X)=0=\theta(f_Ysf_X)t$.  If $c(t)\geq X$, then $f_Ystf_X\sim_{X,Y} f_Ysf_X$, and so $\theta(f_Ystf_X) = \theta(f_Ysf_X) =  \theta(f_Ysf_X)t$.   We conclude that $d$ is a derivation.
\end{proof}

\subsection{Integral bases}
Our goal is now to find a basis for $e_Y(\mathfrak J/\mathfrak J^2)e_X$ for each $X,Y\in \Lambda(S)$.  Theorem~\ref{t:gabriel} will then gives us an integral quiver presentation.  There are three cases: $X,Y$ incomparable, $Y\leq X$ and $Y>X$.  First we prove some results that are valid in all cases.

\begin{Lemma}\label{l:sandwichfyfx}
    Let $j\in \mathbb ZS$ belong to $\ker\chi_X\cap \ker\chi_Y$.  Then $e_Yje_X+\mathfrak J^2= e_Yf_Yjf_Xe_X+\mathfrak J^2$. 
\end{Lemma}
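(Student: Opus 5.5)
The plan is to write the difference $e_Yje_X-e_Yf_Yjf_Xe_X$ as a sum of two terms, each a product of two elements of $\mathfrak J=\ker\tau$, so that it lies in $\mathfrak J^2$. All membership checks go through $\tau$, using $\tau(a)\delta_Z=\chi_Z(a)\delta_Z$ for $a\in\mathbb ZS$, which follows from the identity $\tau(s)\delta_Z=\chi_Z(s)\delta_Z$ noted before Proposition~\ref{p:support}, together with the fact that $\chi_Z$ is a ring homomorphism.

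First, I will record the relevant elements of $\mathfrak J$.
\begin{enumerate}
\item Since $f_Y$ is an idempotent with $Sf_Y=Y$, we have $c(f_Y)=Y$ and so $\chi_Y(f_Y)=1$. Hence $\tau(e_Yf_Y)=\delta_Y\tau(f_Y)=\chi_Y(f_Y)\delta_Y=\delta_Y=\tau(e_Y)$, so $e_Y-e_Yf_Y\in\mathfrak J$.
\item In the same way, $\tau(f_Xe_X)=\tau(f_X)\delta_X=\delta_X$, so $e_X-f_Xe_X\in\mathfrak J$.
\item Because $\chi_X(j)=0$, we get $\tau(je_X)=\tau(j)\delta_X=\chi_X(j)\delta_X=0$, so $je_X\in\mathfrak J$.
\item Similarly, $\tau(e_Yf_Yj)=\delta_Y\tau(f_Yj)=\chi_Y(f_Y)\chi_Y(j)\delta_Y=0$, so $e_Yf_Yj\in\mathfrak J$.
\end{enumerate}

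Next comes the decomposition
\[e_Yje_X-e_Yf_Yjf_Xe_X=(e_Y-e_Yf_Y)\,je_X+e_Yf_Yj\,(e_X-f_Xe_X),\]
which one checks by expanding the right-hand side: the cross terms $e_Yf_Yje_X$ cancel. The first summand is a product of the elements in items 1 and 3, and the second is a product of the elements in items 4 and 2. So both summands lie in $\mathfrak J\cdot\mathfrak J=\mathfrak J^2$, which gives the lemma.

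There is no real obstacle here. The only point needing care is the choice of splitting, so that each factor is visibly killed by $\tau$. In particular, one uses $\chi_X(j)=0$ on the right factor $je_X$ and $\chi_Y(j)=0$ on the left factor $e_Yf_Yj$. That is exactly why the hypothesis requires $j\in\ker\chi_X\cap\ker\chi_Y$.
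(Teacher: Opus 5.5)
Your proposal is correct and is essentially the paper's proof. The paper uses the same two-term splitting $e_Y(1-f_Y)je_X+e_Yf_Yj(1-f_X)e_X$, which you write with $e_Y-e_Yf_Y$ and $e_X-f_Xe_X$ in place of $e_Y(1-f_Y)$ and $(1-f_X)e_X$, and it verifies each factor lies in $\mathfrak J$ via $\tau$ in exactly the way you do.
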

\begin{proof}
    We compute \[e_Y(j-f_Yjf_X)e_X = e_Y(1-f_Y)je_X + e_Yf_Yj(1-f_X)e_X\in \mathfrak J^2\] since, for any $Z\in \Lambda(S)$, $\tau((1-f_Z)e_Z)=0=\tau(e_Z(1-f_Z))$ and $\tau(je_X)=\chi_X(j)\delta_X=0=\chi_Y(j)\delta_Y = \tau(e_Yf_Yj)$ by assumption on $j$.
\end{proof}

\begin{Lemma}\label{l:equiv.mod.rad}
    Let $X,Y\in \Lambda(S)$.
    \begin{enumerate}
        \item If $s\in I_YI_X$, then $e_Yse_X\in \mathfrak J^2$.
        \item If $s,t\in f_YSf_X$ with $s\sim_{X,Y}t$, then $e_Yse_X+\mathfrak J^2= e_Yte_Z+\mathfrak J^2$.
    \end{enumerate}
\end{Lemma}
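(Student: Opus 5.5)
For (1), I will write $s=ab$ with $a\in I_Y$ and $b\in I_X$, so $c(a)\ngeq Y$ and $c(b)\ngeq X$. This gives
\[e_Yse_X=(e_Ya)(be_X).\]
I claim each factor lies in $\mathfrak J$. Indeed, $\tau(e_Ya)=\delta_Y\tau(a)=\chi_Y(a)\delta_Y=0$, and similarly $\tau(be_X)=\tau(b)\delta_X=\chi_X(b)\delta_X=0$. Hence $e_Yse_X\in\mathfrak J\cdot\mathfrak J=\mathfrak J^2$.

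For (2), the statement presumably means $e_Yte_X$ on the right. Since $\sim_{X,Y}$ is the least equivalence relation generated by the two listed kinds of pairs, and equality modulo $\mathfrak J^2$ is an equivalence relation, it suffices to check the generators.

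Generators of the second kind are pairs $s,t\in I_YI_X$. For these, part (1) gives $e_Yse_X,\ e_Yte_X\in\mathfrak J^2$, so both classes are zero and hence equal.

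Generators of the first kind are pairs $f_Ystf_X\sim f_Ysf_X$ with $c(s)\ngeq Y$ and $c(t)\geq X$. Here I will show that
\[e_Yf_Ys(tf_X-f_X)e_X\in\mathfrak J^2.\]
First, $e_Yf_Ys\in\mathfrak J$ because $\tau(e_Yf_Ys)=\chi_Y(f_Y)\chi_Y(s)\delta_Y=0$, since $\chi_Y(s)=0$. Second, $(tf_X-f_X)e_X\in\mathfrak J$ because $\tau((tf_X-f_X)e_X)=(\chi_X(t)\chi_X(f_X)-\chi_X(f_X))\delta_X=(1\cdot1-1)\delta_X=0$, using $c(t)\geq X$ and $c(f_X)=X$. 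The product of these two elements of $\mathfrak J$ is exactly the difference $e_Yf_Ystf_Xe_X-e_Yf_Ysf_Xe_X$, so that difference lies in $\mathfrak J^2$, which settles the first kind of generator.

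I expect no real obstacle. The only point to check is that each factor is killed by $\tau$, and this uses only $\tau(e_Z)=\delta_Z$ together with the identity $\tau(u)\delta_Z=\chi_Z(u)\delta_Z$. Lemma~\ref{l:sandwichfyfx} is not needed for this approach.
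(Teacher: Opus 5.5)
Your proof is correct and follows the paper's argument essentially verbatim: for (1) you use the same factorization $e_Yse_X=(e_Ya)(be_X)$ with both factors killed by $\tau$, and for (2) you make the same reduction to the generators of $\sim_{X,Y}$, writing the difference as $e_Yf_Ys\cdot(tf_X-f_X)e_X$. You are also right that the $e_Z$ in the statement is a typo for $e_X$, and that Lemma~\ref{l:sandwichfyfx} is not needed here (the paper does not use it either).
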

\begin{proof}
    For (1), write $s=ab$ with $a\in I_Y$ and $b\in I_X$.  Then $\tau(e_Ya) = \chi_Y(a)\delta_Y=0=\chi_X(b)\delta_X = \tau(be_X)$, and so $e_Yse_X = e_Yabe_X\in \mathfrak J^2$.
    For (2), note that the relation $s\equiv t$ if $e_Yse_X+\mathfrak J^2 = e_Yte_X+\mathfrak J^2$ is an equivalence relation.  Moreover, if $c(s)\ngeq Y$ and $c(t)\geq X$, then $e_Yf_Ystf_Xe_X- e_Yf_Ysf_Xe_X = e_Yf_Ys(tf_X-f_X)e_X\in \mathfrak J^2$, since $\tau(e_Yf_Ys) = \chi_Y(s)\delta_Y =0$ and $c(tf_X)=X=c(f_X)$.  Therefore, taking into account (1), ${\sim_{X,Y}}\subseteq {\equiv}$.
 \end{proof}

\subsubsection{The case $X$ and $Y$ are incomparable}  Assume that $X$ and $Y$ are incomparable.
Choose a transversal $T'$ to $f_YSf_X/{\sim_{X,Y}}$, and let $T=T'\setminus \{t_0\}$ where $t_0\in T'$ is the representative of the class containing $I_YI_X$ if it is nonempty; otherwise, $T=T'$.

\begin{Thm}
    The elements of the form $e_Yte_X +\mathfrak J^2$ with $t\in T$ form a basis for $e_Y(\mathfrak J/\mathfrak J^2)e_X$.
\end{Thm}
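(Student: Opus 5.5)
The proof splits into spanning and linear independence, using the incomparability of $X$ and $Y$ throughout. Incomparability gives $\delta_Y\cdot\delta_X=0$ in $\mathbb Z^{\Lambda(S)}$, so $e_Ye_X=0$. More importantly, $f_Y$ and $f_X$ lie in $\ker\chi_X\cap\ker\chi_Y$ in the appropriate places: $\chi_X(f_Y)=0$ since $Y\ngeq X$, and $\chi_Y(f_X)=0$ since $X\ngeq Y$. Hence for every $s\in S$ the element $f_Ysf_X$ satisfies $c(f_Ysf_X)\leq X\wedge Y$, which is strictly below both $X$ and $Y$. So every element of $f_YSf_X$ lies in $\ker\chi_X\cap\ker\chi_Y$.

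For spanning, I would first note that $e_Y\mathfrak Je_X=e_Y\mathbb ZSe_X$, because $\tau(e_Yae_X)=\delta_Y\tau(a)\delta_X=0$. So $e_Y(\mathfrak J/\mathfrak J^2)e_X$ is spanned by the cosets $e_Yse_X+\mathfrak J^2$ with $s\in S$. By the observation above, $s\in\ker\chi_X\cap\ker\chi_Y$ whenever $c(s)\ngeq X\vee$-type conditions fail; in general I would write $e_Yse_X=e_Y(s-\chi\text{-correction})e_X$. The simplest route is to apply Lemma~\ref{l:sandwichfyfx} to $j=f_Ysf_X$, or directly: $e_Yse_X\equiv e_Yf_Ysf_Xe_X$ modulo $\mathfrak J^2$. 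This follows because $e_Y(1-f_Y)$ and $(1-f_X)e_X$ lie in $\mathfrak J$, and $\tau(e_Ys\,(1-f_X)e_X)$-type terms vanish since the product lands in $\mathfrak J\cdot\mathfrak J$ once we check $\tau(e_Ys)$ or $\tau(se_X)$. That check needs care when $\chi_Y(s)=\chi_X(s)=1$, that is, when $c(s)\geq X\vee Y$. In that case I would instead write $e_Yse_X=e_Y(s-f_{c(s)})e_X+e_Yf_{c(s)}e_X$ and handle $f_{c(s)}$ similarly, reducing everything to elements $f_Ysf_X$, each of which lies in $\ker\chi_X\cap\ker\chi_Y$. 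Lemma~\ref{l:equiv.mod.rad} then shows that $e_Ytf_X$-classes depend only on the $\sim_{X,Y}$-class and vanish on $I_YI_X$. So the elements $e_Yte_X+\mathfrak J^2$ with $t\in T$ span.

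For linear independence I would use derivations. For each $t\in T$, let $\theta_t\colon f_YSf_X\to\mathbb Z$ be the indicator of the $\sim_{X,Y}$-class of $t$. This class is not the one containing $I_YI_X$, so $\theta_t$ vanishes on $I_YI_X$. Proposition~\ref{p:derivation.from.theta} then gives a derivation $d_t(s)=\theta_t(f_Ysf_X)-\chi_Y(s)\theta_t(f_Yf_X)\chi_X(s)$ of $M_{X,Y}$.

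For $t'\in T$ we have $\chi_X(t')=\chi_Y(t')=0$, since $c(t')\leq X\wedge Y$, and $f_Yt'f_X=t'$. Therefore $d_t(t')=\theta_t(t')=\delta_{t,t'}$. Since each $t'$ lies in $\ker\chi_X\cap\ker\chi_Y$, Proposition~\ref{p:annihilate}(2) gives $d_t(e_Yt'e_X)=\delta_{t,t'}$, and Proposition~\ref{p:annihilate}(1) shows that $d_t$ factors through $\mathfrak J/\mathfrak J^2$. Now suppose $\sum n_{t'}e_Yt'e_X\in\mathfrak J^2$. Applying $d_t$ gives $n_t=0$ for each $t$, which proves independence.

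The main obstacle is the spanning step: reducing an arbitrary $e_Yse_X$ to the sandwiched form modulo $\mathfrak J^2$ when $s$ is not in $\ker\chi_X\cap\ker\chi_Y$. I expect it to work by subtracting suitable idempotents $f_Z$ and using $e_Ye_X=0$. Independence follows directly once the derivations from Proposition~\ref{p:derivation.from.theta} are in place.
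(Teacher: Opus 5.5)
\textbf{Spanning step.} The gap is here. Using the generators $e_Yse_X+\mathfrak J^2$ with $s\in S$ is legitimate. The trouble comes when you replace $e_Yse_X$ by $e_Yf_Ysf_Xe_X$ modulo $\mathfrak J^2$: Lemma~\ref{l:sandwichfyfx} justifies this only when $\chi_X(s)=\chi_Y(s)=0$. The bad elements are those with $c(s)\geq X$ \emph{or} $c(s)\geq Y$, not only those with both, and for them the congruence can genuinely fail.

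Take $M=\{1,e,f,x,z\}$, the path semigroup of the quiver with one arrow $x$ from $f$ to $e$, with zero $z$ and an identity adjoined. So $e^2=e$, $f^2=f$, $ex=x=xf$, and all other products of non-identity elements equal $z$. This is an $\mathscr R$-trivial monoid, $X=Mf$ and $Y=Me$ are incomparable, $f_X=f$, $f_Y=e$, and $\mathfrak J=\mathbb Z(x-z)$, so $\mathfrak J^2=0$. Conjugating the lift $z,\ e-z,\ f-z,\ 1-e-f+z$ by the unit $1+x-z$ gives a valid choice with $e_Y=e-x$ and $e_X=f+x-2z$. Now take $s=f$, so $\chi_X(s)=1$ and $\chi_Y(s)=0$, a case you did not flag. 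Then $e_Yse_X=z-x\neq 0$, while $e_Yf_Ysf_Xe_X=e_Yze_X=0$.

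Your repair for the case you did flag is also circular. Writing $e_Yse_X=e_Y(s-f_{c(s)})e_X+e_Yf_{c(s)}e_X$ handles the first term, but $f_{c(s)}$ has the same values of $\chi_X,\chi_Y$ as $s$. So ``handling $f_{c(s)}$ similarly'' just reproduces the problem.

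\textbf{The fix.} The missing idea is to generate $e_Y\mathfrak Je_X$ from a spanning set of $\mathfrak J$ rather than of $\mathbb ZS$. Since $e_Y\mathfrak Je_X$ is spanned by the $e_Yje_X$ with $j$ in any spanning set of $\mathfrak J$, you may choose every such $j$ inside $\ker\chi_X\cap\ker\chi_Y$. The paper uses the basis of Corollary~\ref{c:rad.basis}. The surviving generators are $e_Y(s-f_X)e_X$ with $s=s^2$ and $c(s)=X$, and $e_Yse_X$ with $d(s)=X$ and $s\neq s^2$. For the latter, $c(s)<X$, and incomparability then forces $c(s)\ngeq Y$. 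The differences $s-f_{c(s)}$, which span $\mathfrak J$ and lie in every $\ker\chi_Z$, would work equally well. In either case Lemma~\ref{l:sandwichfyfx} applies to each generator and Lemma~\ref{l:equiv.mod.rad} finishes, so a lone term like $e_Yf_Xe_X$ never needs to be treated.

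\textbf{Independence.} Your argument is the paper's, apart from a transcription slip. Proposition~\ref{p:derivation.from.theta} gives $d_t(s)=\theta_t(f_Ysf_X)-\chi_Y(s)\theta_t(f_Yf_X)$, with no factor $\chi_X(s)$. With that extra factor, $d_t(f_Yf_X)=\theta_t(f_Yf_X)$ but $f_Yd_t(f_X)+d_t(f_Y)f_X=2\theta_t(f_Yf_X)$, so it is not a derivation when $f_Yf_X\sim_{X,Y}t$. Since you only evaluate at $t'$ with $\chi_X(t')=\chi_Y(t')=0$, the correct formula gives the same values and independence goes through.
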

\begin{proof}
    First note that since $X\neq Y$, $e_Y\mathbb ZSe_X=e_Y\mathfrak Je_X$.  Hence $e_Yte_X\in e_Y\mathfrak Je_X$ for each $t\in T$.   Also, $e_Yt_0e_X+\mathfrak J^2=\mathfrak J^2$ by Lemma~\ref{l:equiv.mod.rad}.  By Corollary~\ref{c:rad.basis}, we have that $e_Y\mathfrak Je_X$ is spanned by the elements of the form $e_Y(s-f_X)e_X$ with $S^1s=Sf_X$ and $e_Yse_X$ with $d(s)=X$ and $s\neq s^2$.   In the latter case, $c(s)< d(s)=X$, and so $c(s)\ngeq Y$, as $Y$ and $X$ are incomparable.  Thus $s\in \ker\chi_X\cap \ker \chi_Y$, and so $e_Yse_X+\mathfrak J^2 = e_Yf_Ysf_Xe_X+\mathfrak J^2 = e_Yte_X+\mathfrak J^2$ where $f_Ysf_X\sim_{X,Y} t$ with $t\in T'$ by Lemmas~\ref{l:sandwichfyfx} and~\ref{l:equiv.mod.rad}.    On the other hand, by Lemma~\ref{l:sandwichfyfx}, $e_Y(s-f_X)e_X+\mathfrak J^2 = e_Yf_Y(s-f_X)f_Xe_X+\mathfrak J^2 = e_Yf_Ysf_Xe_X+e_Yf_Yf_Xe_X+\mathfrak J^2 = e_Yt_1e_X+e_Yt_2e_X+\mathfrak J^2$ with $t_1,t_2\in T'$ such that $f_Ysf_X\sim_{X,Y} t_1$ and $f_Yf_X\sim_{X,Y} t_2$.  We conclude that the elements of the form $e_Yte_X+\mathfrak J^2$ with $t\in T$ span $e_Y(\mathfrak J/\mathfrak J^2)e_X$.

    To prove linear independence, it suffices by Proposition~\ref{p:annihilate} to construct, for each $t\in T$, a derivation $d_t$ of $M_{X,Y}$ such that $d_t(e_Yt'e_X) = \delta_{t,t'}$ (Kronecker delta) for $t'\in T$.   Let $\theta_t\colon f_YSf_X\to \mathbb Z$ be given by 
    \[\theta_t(s) = \begin{cases}
        1, & \text{if}\ s\sim_{X,Y} t\\ 0, & \text{else.}
    \end{cases}\]
    Then by Proposition~\ref{p:derivation.from.theta}, $d_t(s) = \theta_t(f_Ysf_X) - s\theta_t(f_Yf_X)$ defines a derivation of $M_{X,Y}$ since $t\neq t_0$.   Notice that $f_YSf_X\subseteq \ker \chi_X\cap \ker \chi_Y$ since $X$ and $Y$ are incomparable.  Therefore, if $t'\in T$, we have by Proposition~\ref{p:annihilate} 
    \[d_t(e_Yt'e_X) = d_t(t') = \theta_t(t') - t'\theta_t(f_Yf_X)=\theta_t(t') = \delta_{t,t'}\] as required. 
\end{proof}

\subsubsection{The case $Y\leq X$}  Assume that $Y\leq X$.
Fix a transversal $T'$ to $f_YSf_X/{\sim_{X,Y}}$ and let $T=T'\setminus \{t_0,t_1\}$ where $t_0\in T'$ is the representative of the class containing $I_YI_X$ if there is such a class and $t_1\in T'$ represents the class of $f_Y =f_Yf_X$. Of course, $T=T'\setminus \{t_1\}$ if $I_YI_X=\emptyset$.

\begin{Thm}
    The elements of the form $e_Yte_X +\mathfrak J^2$ with $t\in T$ form a basis for $e_Y(\mathfrak J/\mathfrak J^2)e_X$.
\end{Thm}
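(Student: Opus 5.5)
The proof follows the pattern of the incomparable case: spanning via Corollary~\ref{c:rad.basis} and Lemmas~\ref{l:sandwichfyfx} and~\ref{l:equiv.mod.rad}, then linear independence via derivations of $M_{X,Y}$ built from Proposition~\ref{p:derivation.from.theta}. First record some facts. Since $Y\leq X=c(f_X)$, Proposition~\ref{p:J=L for idempotents} gives $f_Yf_X=f_Y$. Every $s\in f_YSf_X$ satisfies $c(s)\leq Y$. Hence $\chi_X(s)=1$ forces $Y=X=c(s)$, and $\chi_Y(s)=1$ iff $c(s)=Y$.

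\textbf{Spanning.} By Corollary~\ref{c:rad.basis}, $e_Y\mathfrak Je_X$ is spanned by two kinds of elements.
\begin{enumerate}
\item The elements $e_Y(s-f_X)e_X$ with $s=s^2$ and $Ss=X$. Here $s-f_X\in\ker\chi_X\cap\ker\chi_Y$, because $c(s)=c(f_X)=X\geq Y$.
\item The elements $e_Yse_X$ with $d(s)=X$ and $s\neq s^2$, so $c(s)<X$. For these I replace $s$ by $a=s-\chi_Y(s)f_Y$. This changes nothing modulo the span of $e_Yf_Ye_X$. Moreover $a\in\ker\chi_X\cap\ker\chi_Y$: one has $\chi_X(s)=0$, and $\chi_X(f_Y)=1$ only when $Y=X$, in which case $\chi_Y(s)=0$.
\end{enumerate}
Lemma~\ref{l:sandwichfyfx} then rewrites each generator modulo $\mathfrak J^2$ as $e_Y(f_Ysf_X-\epsilon f_Y)e_X$ with $\epsilon\in\{0,1\}$. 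Lemma~\ref{l:equiv.mod.rad} replaces $f_Ysf_X$ by its representative in $T'$ and kills $t_0$. So everything lies in the span of $e_Yte_X+\mathfrak J^2$ for $t\in T$, together with $e_Yt_1e_X=e_Yf_Ye_X$. In case (1), and in case (2) when $\epsilon=1$, the $t_1$-contributions enter as differences with $f_Y$.

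\textbf{Eliminating $t_1$ (main obstacle).} It remains to show that the $t_1$-term contributes nothing new.
\begin{itemize}
\item When $Y=X$, I expect every generator to carry the $t_1$-class with total coefficient zero, so the differences $e_X(t-f_X)e_X$ just re-express $e_Xte_X$ modulo $\mathfrak J^2$. This needs care, since $e_Xf_Xe_X\notin\mathfrak J$. It is exactly why $t_1$ is discarded: $\tau(e_Xte_X)=\delta_X$ for each $t$ in the class of $f_X$.
\item When $Y<X$, I would try to show $e_Yf_Ye_X\in\mathfrak J^2$. The first attempt uses Proposition~\ref{p:wrong.support}: since $d(f_Y)=Y\neq X$, the element $f_Ye_X$ is a combination of elements $u$ with $c(u)<Y$, hence lying in $I_Y$. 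Then I would try to factor $e_Yf_Ye_X=e_Yf_Y\cdot f_Ye_X$ as a product of two elements of $\mathfrak J$, possibly after inserting $e_X$ and using $e_Ye_X=0$. This is the step I expect to be most delicate.
\end{itemize}

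\textbf{Independence.} For $t\in T$, let $\theta_t$ be the indicator of the $\sim_{X,Y}$-class of $t$. Since $t\neq t_0$, it vanishes on $I_YI_X$, so Proposition~\ref{p:derivation.from.theta} gives a derivation
\[d_t(s)=\theta_t(f_Ysf_X)-s\,\theta_t(f_Y).\]
Because $t\neq t_1$, we have $\theta_t(f_Y)=0$, so $d_t(s)=\theta_t(f_Ysf_X)$. To use Proposition~\ref{p:annihilate}(2), I cannot feed in $t'$ directly when $\chi_Y(t')=1$. Instead I apply it to $a=t'-\chi_Y(t')f_Y$, which lies in $\ker\chi_X\cap\ker\chi_Y$. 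Then
\[d_t(e_Yae_X)=\theta_t(t')-\chi_Y(t')\theta_t(f_Y)=\delta_{t,t'},\]
and $e_Yae_X\equiv e_Yt'e_X$ modulo the $t_1$-term handled above. Together with Proposition~\ref{p:annihilate}(1) this yields linear independence in $e_Y(\mathfrak J/\mathfrak J^2)e_X$.
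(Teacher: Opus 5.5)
Your independence argument is essentially the paper's. The spanning argument, however, has a genuine gap exactly where you flag one.

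\textbf{The claim $e_Yf_Ye_X\in\mathfrak J^2$ is false for $Y<X$.} Whether it holds depends on the choice of the idempotents $e_Z$, and the theorem must hold for every choice. Take the monoid $S=\{1,f,a,b,z\}$ with $b=fa$, $f^2=f$, $af=a^2=z$, and $z$ a zero.
\begin{enumerate}
\item $S$ is $\R$-trivial, and $\Lambda(S)$ is the chain $W<Y<X$ with $X=S$, $Y=Sf=\{f,z\}$, $W=\{z\}$.
\item $\mathfrak J$ has basis $a-z,\ b-z$, and $\mathfrak J^2=0$.
\item $e_W=z$, $e_Y=f-b$, $e_X=1-f+b-z$ is a legitimate complete set of orthogonal idempotents with $\tau(e_Z)=\delta_Z$.
\item For this choice $e_Yf_Ye_X=b-z\neq 0$. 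Here $T=\{b\}$ and $e_Ybe_X=b-z$, so this is the basis element itself.
\end{enumerate}
No factorization trick can succeed. The correct use of Proposition~\ref{p:wrong.support}, which you already quote, is to \emph{absorb} the $t_1$-term rather than kill it. Write $e_Yf_Ye_X=\sum n_u\,e_Yue_X$ with $c(u)<Y\le X$. Then $u\in\ker\chi_X\cap\ker\chi_Y$. Lemmas~\ref{l:sandwichfyfx} and~\ref{l:equiv.mod.rad} turn $e_Yue_X$ into $e_Yte_X$ with $t\sim_{X,Y} f_Yuf_X$. Since $c(f_Yuf_X)<Y$, this lies in the span of the $T$-elements, with the $t_0$ term vanishing.

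\textbf{The missing singleton-class fact.} You leave as an ``expectation'' in the case $Y=X$ the fact that $f_Y$ is the only element of $f_YSf_X$ with $c\ge Y$. Indeed, if $s=f_Ys$ and $c(s)\ge Y$, then $s=f_Ys=f_Y$ by Proposition~\ref{p:J=L for idempotents}. Moreover $f_Y$ forms a singleton $\sim_{X,Y}$-class. To see this, separate $f_Y$ from everything else and check that the generating pairs respect this split: use $c(f_Ystf_X)<Y$ when $c(s)\ngeq Y$, and note that elements of $I_YI_X$ have $c\ngeq Y$. With this fact:
\begin{itemize}
\item Your case (1) generators vanish outright, because $f_Ysf_X=f_Y$.
\item Your case (2) generators with $\epsilon=0$ have representatives in $T\cup\{t_0\}$, never $t_1$.
\item When $Y=X$ no $t_1$-term arises at all, since $\epsilon=1$ would require $c(s)\ge X>c(s)$.
\item Every $t'\in T$ satisfies $\chi_Y(t')=0$. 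So in your independence step $a=t'$, and the appeal to ``the $t_1$-term handled above'' is vacuous. As written, it would have imported the false claim above.
\end{itemize}
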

\begin{proof}
We first prove that $f_Y=f_Yf_X$ is in a $\sim_{X,Y}$-class of its own.  Define $\equiv$ on $f_YSf_X$ as the equivalence relation identifying all elements not equal to $f_Y$.  Then if $c(s)\ngeq Y$, $c(t)\geq X$, we clearly have $e_Yste_X\neq f_Y\neq e_Yse_X$ by applying $c$ and if $s\in I_YI_X$, then $s\neq f_Y$.  Thus ${\sim_{X,Y}}\subseteq {\equiv}$.

Next, observe that $e_Yt_0e_X+\mathfrak J^2=\mathfrak J^2$ by Lemma~\ref{l:equiv.mod.rad}.  We first prove that $e_Y(\mathfrak J/\mathfrak J^2)e_X$ is spanned by the elements of the form $e_Yse_X+\mathfrak J^2$ with $c(s)<Y$; note that $e_Yse_X\in \mathfrak J$ if $c(s)<Y$.  By Corollary~\ref{c:rad.basis}, we have that $e_Y\mathfrak Je_X$ is spanned by the elements of the form $e_Y(s-f_X)e_X$ with $S^1s=Sf_X$ and $e_Yse_X$ with $d(s)=X$ and $s\neq s^2$.   For elements of the first sort, we have, by Lemma~\ref{l:sandwichfyfx},  $e_Y(s-f_X)e_X+\mathfrak J^2 = e_Yf_Y(s-f_X)f_Xe_X+\mathfrak J^2 =\mathfrak J^2$ since $f_Ys=f_Y=f_Yf_X$, as $Y\leq X$, by Proposition~\ref{p:J=L for idempotents}.    Next consider $e_Yse_X$ with $d(s)=X$, $s^2\neq s$.  There are two cases.  

If $c(s)\ngeq Y$, then since $c(s)<X$, we have $e_Yse_X+\mathfrak J^2 = e_Yf_Ysf_Xe_X+\mathfrak J^2$ by Lemma~\ref{l:sandwichfyfx}, and $c(f_Ysf_X)<Y$.  The case $c(s)\geq Y$ can only happen  if $Y<X$ since $c(s)<X$. In this case, $f_Y=f_Ys$ by Proposition~\ref{p:J=L for idempotents}.  Therefore, $e_Yse_X - e_Yf_Ye_X = e_Y(1-f_Y)se_X\in \mathfrak J^2$ as $e_Y(1-f_Y)\in \mathfrak J$ and $c(s)<X$ implies $se_X\in \mathfrak J$. Thus $e_Yse_X+\mathfrak J^2 = e_Yf_Ye_X+\mathfrak J^2 = \sum_{c(t)<Y}n_te_Xte_Y+\mathfrak J^2$ by Proposition~\ref{p:wrong.support}. This shows that $e_Y(\mathfrak J/\mathfrak J^2)e_X$ is spanned by the elements of the form $e_Yse_X+\mathfrak J^2$ with $c(s)<Y$.

Now if $c(s)<Y\leq X$, then $e_Yse_X+\mathfrak J^2 = e_Yf_Ysf_Xe_X+\mathfrak J^2 = e_Yte_X+\mathfrak J^2$ where $t\in T'\setminus \{t_1\}$ with $f_Ysf_X\sim_{X,Y} t$ by Lemmas~\ref{l:sandwichfyfx} and~\ref{l:equiv.mod.rad}.
We conclude the elements of the form $e_Yte_X+\mathfrak J^2$ with $t\in T$ span $e_Y(\mathfrak J/\mathfrak J^2)e_X$.

To prove linear independence, it suffices by Proposition~\ref{p:annihilate} to construct, for each $t\in T$, a derivation $d_t$ of $M_{X,Y}$ such that $d_t(e_Yt'e_X) = \delta_{t,t'}$ (Kronecker delta) for $t'\in T$.   Let $\theta_t\colon f_YSf_X\to \mathbb Z$ be given by 
    \[\theta_t(s) = \begin{cases}
        1, & \text{if}\ s\sim_{X,Y} t\\ 0, & \text{else.}
                    \end{cases}\]
Since $\theta_t(f_Yf_X) = 0=\theta_t(I_YI_X)$ (as $t\neq t_0,t_1$), Proposition~\ref{p:derivation.from.theta} tells us that $d_t(s) = \theta_t(f_Ysf_X)$ defines a derivation of $M_{X,Y}$.  If $t'\in T$, then $t'\in f_YSf_X\setminus \{f_Y\}$, and so $c(t')<Y\leq X$.  Therefore, $d_t(e_Yt'e_X) = d_t(t') = \theta_t(t') = \delta_{t,t'}$ by Proposition~\ref{p:annihilate}.  This completes the proof that the $e_Yte_X+\mathfrak J^2$ with $t\in T$ form a basis for $e_Y(\mathfrak J/\mathfrak J^2)e_X$.
\end{proof}

\subsubsection{The case $Y>X$}  Assume that $Y>X$.
Pick a transversal $T'$ to $f_YSf_X/{\sim_{X,Y}}$, and let $T=T'\setminus \{t_0,t_1\}$ where $t_0\in T'$ is the representative of the class containing $I_YI_X$  if there is such a class and $t_1\in T'$ represents the class of $f_Yf_X$. We put $T=T'\setminus \{t_1\}$ if $I_YI_X=\emptyset$.

Notice that if $s\in f_YSf_X$, then $c(s)\leq X<Y$.

\begin{Prop}\label{p:c-saturated}
If $s\sim_{X,Y} t$, then $c(s)=X=c(t)$ or $c(s),c(t)<X$.
\end{Prop}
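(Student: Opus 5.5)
Since $\sim_{X,Y}$ is by definition the least equivalence relation on $f_YSf_X$ containing the generating pairs (1) and (2), the plan is to define an auxiliary equivalence relation $\equiv$ on $f_YSf_X$ whose classes are $\{s\mid c(s)=X\}$ and $\{s\mid c(s)<X\}$, and then show that every generating pair of $\sim_{X,Y}$ lies in $\equiv$. This gives ${\sim_{X,Y}}\subseteq{\equiv}$, which is exactly the statement. The relation $\equiv$ is well defined because $c(s)\leq c(f_X)=X$ for every $s\in f_YSf_X$, as noted just before the proposition. So every element falls in exactly one of the two classes.

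First I check the generators of type (2). Let $s\in I_YI_X$, so $s=ab$ with $c(b)\ngeq X$. Then $c(s)=c(a)c(b)\leq c(b)$. If $c(s)=X$, we would get $X\leq c(b)$, which contradicts $c(b)\ngeq X$. Hence $c(s)<X$ for every element of $I_YI_X$, and all such elements lie in one $\equiv$-class.

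Next I check the generators of type (1). These are pairs $f_Ystf_X$ and $f_Ysf_X$ with $c(s)\ngeq Y$ and $c(t)\geq X$. Since $c$ is a homomorphism to a meet semilattice,
\[c(f_Ystf_X)=Y\wedge c(s)\wedge c(t)\wedge X = Y\wedge c(s)\wedge X = c(f_Ysf_X),\]
where the middle equality uses $c(t)\wedge X=X$ because $c(t)\geq X$. So the two elements have equal content, and in particular they lie in the same $\equiv$-class.

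Combining the two checks, the generating set of $\sim_{X,Y}$ is contained in $\equiv$. Minimality of $\sim_{X,Y}$ then gives ${\sim_{X,Y}}\subseteq{\equiv}$, proving the proposition. I do not expect a real obstacle. The only step needing care is the type (2) case, where the argument must use $c(b)\ngeq X$ rather than merely $c(b)\neq X$. The condition $c(s)\ngeq Y$ in the type (1) generators plays no role here; it matters only in other parts of the paper.
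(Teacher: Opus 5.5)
Your proof is correct and follows the paper's argument essentially verbatim. The paper uses the same two-class relation $\equiv$. It checks the type (1) generators via $c(f_Ystf_X)=Y\cap c(s)\cap X=c(f_Ysf_X)$, asserts that elements of $I_YI_X$ have $c(s)<X$, and concludes ${\sim_{X,Y}}\subseteq{\equiv}$. Your only addition is spelling out the type (2) check (writing $s=ab$ with $c(b)\ngeq X$), which the paper states without justification.
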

\begin{proof}
    Let $\equiv$ be the equivalence relation on $f_YSf_X$ with $s\equiv t$ if and only if $c(s)=X=c(t)$ or $c(s),c(t)<X$.  If $c(s)\ngeq Y$ and $c(t)\geq X$, then $c(f_Ystf_X) = Y\cap c(s)\cap X = c(f_Ysf_X)$, and so $f_Ystf_X\equiv f_Ysf_X$.   If $s\in I_YI_X$, then $c(s)< X$. It follows that 
    ${\sim_{X,Y}}\subseteq {\equiv}$.
\end{proof}

\begin{Thm}
    The elements of the form $e_Y(t-t_1)e_X +\mathfrak J^2$ with $t\in T$ and $c(t)=X$ and of the form $e_Yte_X+\mathfrak J^2$ with $c(t)<X$ are a basis for $e_Y(\mathfrak J/\mathfrak J^2)e_X$.
\end{Thm}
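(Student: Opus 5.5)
Following the two previous cases, I split the proof into spanning and linear independence, the latter via derivations of $M_{X,Y}$ and Proposition~\ref{p:annihilate}. First record that, since $Y>X$, every $s\in f_YSf_X$ satisfies $c(s)\le X<Y$, so $\chi_Y(s)=0$. Also $\chi_X(s)=1$ if and only if $c(s)=X$. Hence $e_Yse_X\in\mathfrak J$ whenever $c(s)<X$, and $t-t_1\in\ker\chi_X\cap\ker\chi_Y$ when $c(t)=X$. By Proposition~\ref{p:c-saturated}, $c(t_1)=c(f_Yf_X)=X$. The class of $t_0$ (if $I_YI_X\neq\emptyset$) consists of elements with $c<X$, so the proposed elements are well defined and lie in $e_Y\mathfrak Je_X$; here I interpret the second family as indexed by $t\in T$ with $c(t)<X$.

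\textbf{Spanning.} By Corollary~\ref{c:rad.basis}, $e_Y\mathfrak Je_X$ is spanned by the elements $e_Y(s-f_X)e_X$ with $S^1s=Sf_X$ and $e_Yse_X$ with $d(s)=X$, $s\neq s^2$.
\begin{enumerate}
\item In the second case $c(s)<X<Y$, so $s\in\ker\chi_X\cap\ker\chi_Y$. Lemmas~\ref{l:sandwichfyfx} and~\ref{l:equiv.mod.rad} give $e_Yse_X\equiv e_Yte_X$ modulo $\mathfrak J^2$, where $t\in T'$ represents $f_Ysf_X$. This $t$ has $c(t)<X$ by Proposition~\ref{p:c-saturated}, and the term vanishes if $t=t_0$.
\item In the first case $s-f_X\in\ker\chi_X\cap\ker\chi_Y$. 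Lemma~\ref{l:sandwichfyfx} gives $e_Y(s-f_X)e_X\equiv e_Y(f_Ysf_X-f_Yf_X)e_X$ modulo $\mathfrak J^2$. Since $c(f_Ysf_X)=X$, choose $t\in T'$ with $t\sim_{X,Y}f_Ysf_X$; then $c(t)=X$ and the element becomes $e_Y(t-t_1)e_X+\mathfrak J^2$.
\end{enumerate}
The step in the second item needs Lemma~\ref{l:equiv.mod.rad}(2) applied to a difference. Because $e_Y f_Ysf_X e_X$ itself need not lie in $\mathfrak J$, I would reprove the relevant part directly: the proof of Lemma~\ref{l:equiv.mod.rad} actually shows $e_Yf_Ys(tf_X-f_X)e_X\in\mathfrak J^2$. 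Hence the equivalence relation $a\equiv b$ iff $e_Y(a-b)e_X\in\mathfrak J^2$ on $f_YSf_X$ contains the generators of $\sim_{X,Y}$, and therefore contains $\sim_{X,Y}$. Handling this subtlety carefully is the main obstacle in the spanning part.

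\textbf{Independence.} For each $t\in T$ let $\theta_t$ be the indicator function of the $\sim_{X,Y}$-class of $t$. It vanishes on $I_YI_X$ since $t\neq t_0$, so Proposition~\ref{p:derivation.from.theta} gives a derivation $d_t(s)=\theta_t(f_Ysf_X)-s\theta_t(f_Yf_X)$, and $\theta_t(f_Yf_X)=0$ as $t\neq t_1$. Proposition~\ref{p:annihilate}(2) applies to $a=t'$ when $c(t')<X$ and to $a=t'-t_1$ when $c(t')=X$, because these lie in $\ker\chi_X\cap\ker\chi_Y$. It gives $d_t(e_Yt'e_X)=\theta_t(t')=\delta_{t,t'}$ and $d_t(e_Y(t'-t_1)e_X)=\theta_t(t')-\theta_t(t_1)=\delta_{t,t'}$, using $\theta_t(t_1)=0$. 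By Proposition~\ref{p:annihilate}(1) the $d_t$ kill $\mathfrak J^2$, so the proposed elements are linearly independent, and together with spanning they form a basis.
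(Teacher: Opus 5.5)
Your proposal is correct and follows the paper's proof essentially step for step. Spanning goes through Corollary~\ref{c:rad.basis} together with Lemmas~\ref{l:sandwichfyfx} and~\ref{l:equiv.mod.rad}, and independence uses the indicator derivations $d_t(s)=\theta_t(f_Ysf_X)$.

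The subtlety you flag is not real. Since $X\neq Y$, we have $\tau(e_Yae_X)=\delta_Y\tau(a)\delta_X=0$, so $e_Y\mathbb ZSe_X\subseteq\mathfrak J$. In any case, Lemma~\ref{l:equiv.mod.rad}(2) is already the statement $e_Y(s-t)e_X\in\mathfrak J^2$, so no reproof is needed.
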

\begin{proof}
First note that $e_Yt_0e_X+\mathfrak J^2=\mathfrak J^2$ by Lemma~\ref{l:equiv.mod.rad}.  By Corollary~\ref{c:rad.basis}, we have that $e_Y\mathfrak Je_X$ is spanned by the elements of the form $e_Y(s-f_X)e_X$ with $S^1s=Sf_X$ and $e_Yse_X$ with $d(s)=X$ and $s\neq s^2$.  If $S^1s=Sf_X$, then by Lemmas~\ref{l:sandwichfyfx} and~\ref{l:equiv.mod.rad} we have that  \[e_Y(s-f_X)e_X+\mathfrak J^2 = e_Yf_Y(s-f_X)f_Xe_X+\mathfrak J^2 = e_Y(t-t_1)e_X+\mathfrak J^2\] where $t\sim_{X,Y} f_Ysf_X$, and so $c(t)=X$ as $Y>X$.  If $d(s)=X$ and $s\neq s^2$, then $c(s)<X<Y$.  Therefore, by Lemmas~\ref{l:sandwichfyfx} and~\ref{l:equiv.mod.rad}, we have that $e_Yse_X+\mathfrak J^2 = e_Yf_Ysf_Xe_X+\mathfrak J^2 = e_Yte_X+\mathfrak J^2$ where $f_Ysf_X\sim_{X,Y} t$, and hence $c(t)<X$.  It follows that the elements of the form    $e_Y(t-t_1)e_X +\mathfrak J^2$ with $t\in T$ and $c(t)=X$ and of the form $e_Yte_X+\mathfrak J^2$ with $c(t)<X$ span  $e_Y(\mathfrak J/\mathfrak J^2)e_X$.

To prove linear independence, it suffices by Proposition~\ref{p:annihilate} to construct, for each $t\in T$, a derivation $d_t$ of $M_{X,Y}$ such that $d_t(e_Y(t'-t_1)e_X)=\delta_{t,t'}$ (Kronecker delta) if $t'\in T$ with $c(t')=X$, and $d_t(e_Yt'e_X) = \delta_{t,t'}$  for $t'\in T$ with $c(t')<X$.   Let $\theta_t\colon f_YSf_X\to \mathbb Z$ be given by 
    \[\theta_t(s) = \begin{cases}
        1, & \text{if}\ s\sim_{X,Y} t\\ 0, & \text{else.}
                    \end{cases}\]
As $\theta_t(f_Yf_X) = 0=\theta_t(I_YI_X)$ (since $t\neq t_0,t_1$),  Proposition~\ref{p:derivation.from.theta} tells us that $d_t(s) = \theta_t(f_Ysf_X)$ defines a derivation of $M_{X,Y}$. Let $t'\in T$.   If $c(t')=X$ (which equals $c(t_1)$), then by Proposition~\ref{p:annihilate} 
\[d_t(e_Y(t'-t_1)e_X)= d_t(t'-t_1) = \theta_t(t')-\theta_t(t_1) =\theta_t(t')= \delta_{t,t'}.\] If $c(t')<X<Y$, then, by Proposition~\ref{p:annihilate},  $d_t(e_Yt'e_X) = d_t(t')=\theta_t(t') = \delta_{t,t'}$.  This finishes the proof of the theorem.
\end{proof}

\subsection{The quiver presentation}
The results of the previous subsection, together with Theorem~\ref{t:gabriel} yield our main result.

\begin{Thm}
    Let $S$ be a connected $\mathscr R$-trivial semigroup.  Fix an idempotent $f_X$ with $Sf_X=X$ for all $X\in \Lambda(S)$.  For $X,Y\in \Lambda(S)$, let $Z_{X,Y}$ be the complement in $f_YSf_X/{{\sim_{X,Y}}}$ of the $\sim_{X,Y}$-class containing $I_YI_X$, if nonempty, else $Z_{X,Y}=f_YSf_X/{{\sim_{X,Y}}}$.  Let $Q(S)$ be the quiver with vertex set $\Lambda(S)$ and with 
\[|Q(S)_1(X,Y)|  =\begin{cases}
    |Z_{X,Y}|, &  \text{if $X$ and $Y$  are incomparable,}\\
    |Z_{X,Y}|-1, & \text{if $X$ and $Y$ are comparable.}
\end{cases}\]      Then $\mathbb ZS\cong \mathbb ZQ(S)/I$ for an admissible ideal $I$.
Consequently, for any field $k$, we have $kS\cong kQ(S)/k\otimes I$ with $k\otimes I$ admissible.
\end{Thm}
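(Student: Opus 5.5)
The plan is to apply Theorem~\ref{t:gabriel} with $k=\mathbb Z$, $A=\mathbb ZS$, $\mathfrak J=\ker c$, $X=\Lambda(S)$ and the idempotents $e_X$ fixed earlier. So I will check its hypotheses one at a time, count the resulting bases, and then base change.

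The hypotheses are quick. $\mathbb ZS$ is free abelian on $S$. $\mathfrak J$ is nilpotent by Theorem~\ref{t:nil}, because connectedness gives $s\in sS$. We have $\mathbb ZS/\mathfrak J\cong\mathbb Z\Lambda(S)\cong\mathbb Z^{\Lambda(S)}$ by Solomon's isomorphism. The algebra $\mathbb ZS$ is unital by Theorem~\ref{t:right.connected}, so the $e_X$ form a complete set of orthogonal idempotents lifting the $\delta_X$. This gives
\[\mathfrak J/\mathfrak J^2=\bigoplus_{X,Y}e_Y(\mathfrak J/\mathfrak J^2)e_X.\]
By the three theorems of the previous subsection, each summand $e_Y(\mathfrak J/\mathfrak J^2)e_X$ is free with an explicit basis. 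Hence $\mathfrak J/\mathfrak J^2$ is finitely generated and free.

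Next comes the counting of basis elements, done by case.
\begin{itemize}
\item If $X$ and $Y$ are incomparable, the basis is indexed by $T$. This is the transversal $T'$ of $f_YSf_X/{\sim_{X,Y}}$ with the class containing $I_YI_X$ (if nonempty) removed, so $|T|=|Z_{X,Y}|$.
\item If $Y\le X$, the basis is indexed by $T=T'\setminus\{t_0,t_1\}$. I first need $t_1\neq t_0$, i.e.\ the class of $f_Y$ is not the class of $I_YI_X$. This holds because the proof showed $f_Y$ is alone in its $\sim_{X,Y}$-class, and $f_Y\notin I_YI_X$ since $c(f_Y)=Y$. So $|T|=|Z_{X,Y}|-1$.
\item If $Y>X$, the basis is indexed by all $t\in T$. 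Elements with $c(t)=X$ give $e_Y(t-t_1)e_X$ and those with $c(t)<X$ give $e_Yte_X$. I again need $t_1\neq t_0$, which follows from Proposition~\ref{p:c-saturated}: $c(f_Yf_X)=X$, while every element of $I_YI_X\cap f_YSf_X$ has $c<X$. So the count is again $|Z_{X,Y}|-1$.
\end{itemize}
Theorem~\ref{t:gabriel} then yields $\mathbb ZS\cong\mathbb ZQ/I$ with $I$ admissible, where $|Q_1(X,Y)|=|B_{YX}|$, matching $Q(S)$.

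For the field statement, apply Proposition~\ref{p:admiss.ov.z}. It gives that $k\otimes I$ is admissible and $kQ(S)/(k\otimes I)\cong k\otimes\mathbb ZS\cong kS$.

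The main obstacle is the bookkeeping in the comparable cases: making sure the removed classes $t_0$ and $t_1$ are distinct, so the count is exactly $|Z_{X,Y}|-1$. A second point is that arrows in Theorem~\ref{t:gabriel} from $X$ to $Y$ correspond to $B_{YX}\subseteq e_Y\mathfrak Je_X$, so the orientation must be matched with the convention in the statement.
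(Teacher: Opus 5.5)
Your proposal is correct and follows the paper's route: the paper's proof consists of combining the three basis theorems of the previous subsection with Theorem~\ref{t:gabriel}, and then applying Proposition~\ref{p:admiss.ov.z} to pass to fields. Your explicit check that the classes of $t_0$ and $t_1$ are distinct in the comparable cases is exactly the bookkeeping the paper leaves implicit. You get this from the singleton class of $f_Y$ when $Y\le X$, and from Proposition~\ref{p:c-saturated} when $Y>X$.
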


\bibliographystyle{abbrv}
\bibliography{standard2}

\end{document}